\numberwithin{equation}{section}
\theoremstyle{plain}
\newtheorem{thm}[equation]{Theorem}
\newtheorem{cor}[equation]{Corollary}
\newtheorem{lem}[equation]{Lemma}
\newtheorem{prop}[equation]{Proposition}
\newtheorem{definition}[equation]{Definition}
\newtheorem{rem}[equation]{Remark}
\theoremstyle{definition}
\theoremstyle{remark}
\newtheorem{claim}[equation]{Claim}
\DeclareMathOperator{\Gal}{Gal}
\def\inv{{\rm inv\,}}
\def\A{{\mathbb A}}
\def\Gal{{\rm Gal}}
\DeclareFontFamily{U}{wncy}{}
\DeclareFontShape{U}{wncy}{m}{n}{%
<5>wncyr5%
<6>wncyr6%
<7>wncyr7%
<8>wncyr8%
<9>wncyr9%
<10>wncyr10%
<11>wncyr10%
<12>wncyr6%
<14>wncyr7%
<17>wncyr8%
<20>wncyr10%
<25>wncyr10}{}
\DeclareMathAlphabet{\cyr}{U}{wncy}{m}{n}
\begin{document}

\title[Strong approximation for certain quadric fibrations with compact fibers]
{Strong approximation for certain quadric fibrations with compact fibers}

\author{Fei XU}

\address{School of Mathematical Sciences \\
Capital Normal University \\
Beijing 100048, China}

\email{xufei@math.ac.cn}

\address{}

\email{}

\date{\today}

\maketitle

\section{Introduction}

In \cite{Wat}, G.L.Watson investigated the local-global principle over $\Bbb Z$ for the following equation
\begin{equation} \label{watson}
q(x_1,\cdots,x_n)+\sum_{i=1}^n a_i(t)x_i+b(t)=0
\end{equation}
where $q(x_1,\cdots,x_n)$ is a quadratic form over $\Bbb Z$ and $a_1(t), \cdots, a_n(t)$ and $b(t)$ are polynomials over $\Bbb Z$.
The variety defined by (\ref{watson}) is isomorphic to the variety defined by
\begin{equation} \label{ctx} q(x_1, \cdots, x_n)=p(t) \end{equation}
over $\Bbb Q$, where $p(t)$ is a polynomial over $\Bbb Q$. The study of strong approximation of equation (\ref{ctx}) will provide the solvability of equation (\ref{watson}) by choosing the equation (\ref{watson}) as an integral model. In \cite{CTX1}, J.-L. Colliot-Th\'el\`ene and the author proved that the equation (\ref{ctx}) satisfies the strong approximation with Brauer-Manin obstruction when the quadratic form $q$ is indefinite with $n\geq 3$. In \cite{CTH}, J.-L. Colliot-Th\'el\`ene and D.Harari extended this result to very general fibration of homogeneous spaces of linear algebraic groups with isotropic assumption for the fibers over $\Bbb R$-points. By such fibration method, one needs enough fibers satisfying the strong approximation with Brauer-Manin obstruction. One of the necessary conditions of strong approximation with Brauer-Manin obstruction for subvarieties of affine varieties is that the set of $\Bbb R$-points is not compact. In this paper, we will show that strong approximation with Brauer-Manin obstruction holds for (\ref{ctx}) with $n\geq 3$ if and only if $\Bbb R$-points of total spaces is not compact. When the quadratic form $q$ is definite, this provides the example that none of fibers satisfies strong approximation with Brauer-Manin obstruction. As application, the main result in \cite{Wat} is the immediate consequence of our result. Moreover, in order to prove our result, we develop the representation theory of quadratic Diophantine equations and explain that the representability of quadratic polynomials is equivalent to the classical result of representability of quadratic forms with congruent conditions and extend Theorem 2.1 in \cite{HJ} to general number fields.

Notation and terminology are standard. Let $F$ be a number field, $\frak o_F$ be the ring of integers of
$F$ and $\Omega_F$ be the set of all primes in $F$. For each $v\in \Omega_F$, let $F_v$ be the completion of $F$ at $v$.
Let  $\infty_F$ be  the set of  archimedean primes in $F$ and write
$v<\infty_F$ for $v\in \Omega_F\setminus \infty_{F}$.
For each $v<\infty_F$, let $\frak o_v$ be the completion
of $\frak o_F$ at $v$ and $\pi_v$ is the uniformizer of
$\frak o_{v}$ with $ord_v(\pi_v)=1$ and assume $ord_v(0)=+ \infty$. Write $\frak o_{v}=F_v$ for
$v\in \infty_F$.
For any finite subset $T$ of $\Omega_F$, let $F_{T}=\prod_{{v} \in T}F_{v}$.
For any finite subset $S$ of $\Omega_F$ containing $\infty_F$,
 the $S$-integers are defined to be elements in $F$ which are integral outside $S$ and denoted by $\frak o_S$. Let $\Bbb A_F$ be the adelic group of $F$
with its usual topology.
For any finite set $T$ of $\Omega_F$, one   defines $\Bbb A_{F}^T \subset  (\prod_{v\not \in T} F_v)$
equipped with analogous   adelic
  topology. The  natural projection which omits the  $T$-coordinates  defines  homomorphism of rings
$  \Bbb A_F \to \Bbb A_{F}^T$. For any scheme of finite type $X$ over $F$ this induces a map
$$ pr^T  :  X(\Bbb A_{F})   \to   X(\Bbb A_{F}^T).$$
Let $Br(X)=H_{et}^2(X, \Bbb G_m)$ and
$$X_F(\mathbb A_F)^{Br(X)}= \{ (x_v)_v\in X_F(\mathbb A_F):
  \sum_{v\in \Omega_F }\inv_v(\xi(x_v))=0 \ \text{ for all $\xi\in Br(X)$}
  \}.$$ Then $$ X(F) \subseteq X_F(\mathbb A_F)^{Br(X)} \subseteq X_F(\mathbb A_F) $$ by class field theory.

\begin{definition} Let $X$ be a scheme of finite type over $F$. One says that strong approximation with Brauer-Manin obstruction off $T$ holds for $X$ if $X(F)$ is dense in $pr^T(X(\Bbb A_F)^{Br(X)})$ under the diagonal map.
 \end{definition}

The paper is organized as follows. In \S 2, we set up spinor genus theory for quadratic Diophantine equations briefly. In \S 3,  we proved strong approximation with Brauer-Manin obstruction off $\infty_F$ for the variety defined by (\ref{ctx}). As application, we explain the equivalence between representability of quadratic Diophantine polynomials and representability of quadratic forms with congruent condition for definite case and extend Theorem 2.1 in \cite{HJ} over a number field.
\bigskip

\section{Quadratic Diophantine equations}\label{qde}

Arithmetic theory of quadratic forms is a classical topic and has been extensively studied by various methods, for example in \cite{Ei}, \cite{OM}, \cite{Cas} and \cite{Ki}. In order to establish the strong approximation for (\ref{ctx}) with definite quadratic form $q$, we need to extend the classical results for quadratic forms to general quadratic polynomials. Such generalization has already been considered by Kneser in \cite{Kn} and Watson in \cite{Wat60}. Recently,  for various purpose, such generalization has been studied by Shimura in \cite{Shi1}, \cite{Shi2} and \cite{Shi3}, Sun in \cite{Sun07}, Colliot-Th\'el\`ene and Xu in \cite{CTX}, Wei and Xu in \cite{WX} and Chan and Oh in \cite{CO} and so on. In this section, we briefly set up some basic properties for quadratic Diophantine equations which we need in next sections.

Let $V$ be a non-degenerated quadratic space over $F$
with the associated symmetric bilinear map $$ \langle, \rangle : \ \ V\times V\longrightarrow
F \ \ \ \text{with} \ \ \ q(x)=\langle x,x \rangle$$ for any $x\in V$ and the special orthogonal group $$SO(V)=\{ \sigma \in GL(V):  \ \ q(\sigma
x)=q(x) \ \ \text{for all} \ \  x\in V  \ \ \text{and} \ \ det(\sigma)=1 \}  $$ with the double cover $Spin(V)$ satisfying
\begin{equation} \label{galois-coh} 1\rightarrow \mu_2 \rightarrow Spin(V) \xrightarrow{\iota} SO(V) \xrightarrow{\theta} H^1(F,\mu_2)\cong F^\times/(F^\times)^2  \end{equation}  by Galois cohomology. Let
$SO_\A(V)$, $Spin_\A (V)$ and  $GL_\A(V)$ be the adelic groups of $SO(V)$, $Spin(V)$ and
$GL(V)$ respectively.

A finitely generated $\frak o_F$-module $L$ in $V$ is called an $\frak o_F$-lattice if $FL=V$. We denote the local
completion of a lattice $L$ inside the local completion $V_v$ of $V$ at
$v$ by $L_v$ for $v< \infty_F$ and $L_v=V_v$ for
$v\in \infty_F$. The classical theory of integral quadratic forms can be formulated in terms of lattices such as \cite{OM} and \cite{Ki}. Most of theory can be extended to general quadratic equations in terms of lattice translations, i.e. $L+u_0$ for some $u_0\in V$.

The local representation of integral quadratic forms has been extensively studied in  \cite{OM58} and \cite{Rie}. For general quadratic equations, one has the following result.

\begin{lem} \label{opencom} Let $L$ be a lattice in $V$ and $u_0\in V$. If $u_0\not\in L_v$ for $v<\infty_F$, then $q(L_v+u_0)$ is an open compact subset of $F_v$. \end{lem}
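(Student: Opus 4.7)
The plan is to separate the two assertions. Compactness is immediate: since $v$ is non-archimedean, $L_v$ is a full $\mathfrak{o}_v$-lattice in the finite-dimensional $F_v$-vector space $V_v$ and hence compact in the $v$-adic topology. Its translate $L_v+u_0$ is therefore compact, and so is its continuous image $q(L_v+u_0)\subseteq F_v$.

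Openness is the substantive part. I would show that for every $x_0\in L_v$, the set $q(L_v+u_0)$ contains an open neighborhood of $q(x_0+u_0)$. Expanding around $x_0$,
\begin{equation*}
q(x_0+y+u_0)-q(x_0+u_0)\;=\;2\langle y,x_0+u_0\rangle+q(y),
\end{equation*}
exhibits the linear form $\ell(y)=2\langle y,x_0+u_0\rangle$ as the ``differential'' at $x_0$ of the polynomial map $\phi(x)=q(x+u_0)$. This is where the hypothesis $u_0\notin L_v$ is used: it forces $x_0+u_0\neq 0$ for every $x_0\in L_v$ (otherwise $u_0=-x_0\in L_v$), so by non-degeneracy of $\langle,\rangle$ and the fact that $L_v$ spans $V_v$ over $F_v$, one can pick $e\in L_v$ with $\beta:=\langle e,x_0+u_0\rangle\neq 0$.

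Restricting to the line $y=te$ with $t\in\mathfrak{o}_v$ reduces the problem to one variable: since $x_0+te\in L_v$ for all $t\in\mathfrak{o}_v$, one has
\begin{equation*}
q(x_0+te+u_0)-q(x_0+u_0)\;=\;2\beta\,t+q(e)\,t^2\;=:\;f(t),
\end{equation*}
so it suffices to show that $f(\mathfrak{o}_v)$ contains an open neighborhood of $0$ in $F_v$. Since $f'(0)=2\beta\neq 0$, this is a clean application of Hensel's lemma to $t\mapsto f(t)-c$ with initial approximation $t_0=0$, valid for $|c|_v$ small enough to satisfy the Hensel inequality $|c|_v<|2\beta|_v^2$; after possibly rescaling $e$ by a power of $\pi_v$ one may assume the resulting solution $t$ lies in $\mathfrak{o}_v$.

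The main technical point to watch is the dyadic case, where $|2|_v<1$ makes the Hensel bound $|2\beta|_v^2$ more restrictive. However this only shrinks the explicit neighborhood produced, not its openness, so the conclusion is uniform in $v$ and the global statement follows by running the argument at each $x_0\in L_v$.
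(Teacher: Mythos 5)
Your proof is correct, and it takes a genuinely more elementary route than the paper. Both arguments hinge on the same key observation — namely that $u_0\notin L_v$ forces $L_v+u_0\subseteq V_v\setminus\{0\}$, precisely the locus where the differential $y\mapsto 2\langle x,y\rangle$ of $q$ is surjective — but they deploy different tools once this is noted. The paper simply invokes the fact that $q$ is a smooth morphism on $\mathbf{A}^n\setminus\{0\}$ and appeals to the $v$-adic inverse function theorem (Thm.\ 3.2 and Prop.\ 3.3 in \cite{PR}) to conclude that $q\colon V_v\setminus\{0\}\to F_v$ is an open map, after which the statement is immediate. You instead unwind that black box: the restriction of $q(\cdot+u_0)$ to a line $x_0+te$ through a point $x_0\in L_v$ in a direction $e\in L_v$ with $\langle e,x_0+u_0\rangle\neq 0$ becomes a one-variable quadratic $f(t)=2\beta t+q(e)t^2$ with $f'(0)=2\beta\neq 0$, and Hensel/Newton produces a small solution $t\in\mathfrak{o}_v$ of $f(t)=c$ for any sufficiently small $c$. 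This is a hands-on proof that $q$ is a submersion at $x_0+u_0$ in the only direction you need. The trade-off is the usual one: the paper's proof is shorter but cites a larger external result; yours is self-contained, makes the dependence on $u_0\notin L_v$ completely transparent, and requires only Hensel's lemma. One small remark: the precise Hensel bound $|c|_v<|2\beta|_v^2$ as you wrote it presupposes $\mathfrak{o}_v$-coefficients, which $q(e)$ and $\beta$ need not satisfy; this is not a gap, since (as you note) one can rescale $e$, or simply observe that the Newton polygon of $q(e)t^2+2\beta t-c$ always has a segment producing a root with $v(t)=v(c)-v(2\beta)\to\infty$ as $v(c)\to\infty$, so the root lands in $\mathfrak{o}_v$ once $|c|_v$ is small enough without any normalization.
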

\begin{proof} Since the quadratic form $q$ on $V_v$ defines a morphism
$$ q: {\bf A}^n \rightarrow {\bf A}^1 $$ of affine spaces over $F_v$, one obtains that $q$ is smooth over the open sub-scheme ${\bf A}^n \setminus \{(0, \cdots, 0)\}$. By the inverse function theorem (see Thm.3.2 and Prop.3.3 in \cite{PR}), one has the continuous map
 $$ q: \ \ V_v \setminus \{0\} \rightarrow F_v $$ is open. If $u_0\not\in L_v$, then $L_v+u_0$ is an open and compact subset of $V_v \setminus \{0\}$. Therefore $q(L_v+u_0)$ is an open compact subset of $F_v$.
\end{proof}

\begin{cor} \label{neig} Let $L$ be a lattice in $V$ and $u_0\in V$. If $u_0\not\in L_v$ for $v<\infty_F$, there is an open subgroup $U$ of $\frak o_v^\times$ such that $\alpha U \subseteq q(L_v+u_0)$ for all $\alpha\in q(L_v+u_0)$.
\end{cor}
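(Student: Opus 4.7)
The approach I would take is a compactness argument by contradiction, relying directly on Lemma \ref{opencom}, which gives both openness and compactness of $q(L_v+u_0)$ in $F_v$. The natural candidates for $U$ are the standard open subgroups $U_N=1+\pi_v^N\mathfrak{o}_v\subseteq \mathfrak{o}_v^\times$, and the point is simply to show that one of them works uniformly for every $\alpha$ in the set.

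First I would suppose, for contradiction, that no such $U$ exists. Taking $U=U_N$ for each $N\geq 1$, one would obtain sequences $\alpha_N\in q(L_v+u_0)$ and $u_N\in U_N$ with $\alpha_N u_N \notin q(L_v+u_0)$. A short case check disposes of $\alpha_N=0$: then $\alpha_N u_N=0$, which lies in $q(L_v+u_0)$ iff $\alpha_N$ does, giving a contradiction either way; so one may assume $\alpha_N\neq 0$.

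Next I would extract a convergent subsequence $\alpha_{N_k}\to \alpha_\infty$ from the compact set $q(L_v+u_0)$; by closedness $\alpha_\infty\in q(L_v+u_0)$. Since $|u_{N_k}-1|\leq |\pi_v|^{N_k}\to 0$, one has $u_{N_k}\to 1$, and hence $\alpha_{N_k}u_{N_k}\to \alpha_\infty$ in $F_v$. Because $q(L_v+u_0)$ is open by Lemma \ref{opencom}, $\alpha_\infty$ admits a neighborhood $W\subseteq q(L_v+u_0)$; for $k$ sufficiently large $\alpha_{N_k}u_{N_k}\in W$, contradicting the choice of the sequences. Thus for some $N$ the subgroup $U=1+\pi_v^N\mathfrak{o}_v$ satisfies the claim.

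I do not foresee any real obstacle; the proof is essentially a one-line application of sequential compactness of $q(L_v+u_0)$ together with continuity of multiplication. The only subtlety to keep an eye on is the case $0\in q(L_v+u_0)$, which is handled by the trivial observation $0\cdot U=\{0\}$ and by the case split on $\alpha_N=0$ above; whether or not $\alpha_\infty=0$, the openness of $q(L_v+u_0)$ at $\alpha_\infty$ is exactly what delivers the contradiction.
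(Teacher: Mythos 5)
Your proof is correct. It rests on exactly the same two facts from Lemma~\ref{opencom} that the paper uses --- openness and compactness of $q(L_v+u_0)$ --- but you invoke compactness in the sequential form inside a contradiction argument, whereas the paper runs a direct finite-subcover argument. Concretely, the paper covers $q(L_v+u_0)$ by finitely many balls $x_i+\pi_v^{a_{x_i}}\mathfrak{o}_v$ and then takes
\[
a=\max_i\{a_{x_i},\ a_{x_i}-\mathrm{ord}_v(x_i)\},\qquad U=1+\pi_v^a\mathfrak{o}_v,
\]
checking by hand that $(x_i+\pi_v^{a_{x_i}}\mathfrak{o}_v)(1+\pi_v^a\mathfrak{o}_v)\subseteq x_i+\pi_v^{a_{x_i}}\mathfrak{o}_v$; the second term in the $\max$ is there precisely to absorb the cross term $x_i\pi_v^a\mathfrak{o}_v$. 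Your contradiction argument sidesteps this bookkeeping entirely, at the cost of not producing an explicit $a$. The side remark about $\alpha_N=0$ is harmless but also unnecessary: as you note, $\alpha_N u_N=0=\alpha_N\in q(L_v+u_0)$ already contradicts $\alpha_N u_N\notin q(L_v+u_0)$, so such terms simply cannot occur; and even if they did, the limiting argument does not actually require $\alpha_\infty\neq 0$, only openness of $q(L_v+u_0)$ at $\alpha_\infty$. In short: same ingredients, constructive finite-cover proof in the paper versus a clean but nonconstructive sequential-compactness proof from you; both are sound, and the paper's version has the small advantage of an explicit exponent.
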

\begin{proof} For any $x\in q(L_v+u_0)$, there is a positive integer $a_x$ such that $$ x+ \pi_v^{a_x} \frak o_v  \subseteq q(L_v+u_0) $$ by Lemma \ref{opencom}. By compactness of $q(L_v +u_0)$, one has
$$ q(L_v+u_0) = \bigcup_{i=1}^n (x_i+ \pi_v^{a_{x_i}} \frak o_v) .$$ Let $$ a = \max_{1\leq i\leq n}  \{ a_{x_i}, \ \ a_{x_i}- ord_v(x_i)\} \ \ \ \text{and} \ \ \  U =1+\pi_v^a \frak o_v$$ be an open subgroup of $\frak o_v^\times$. For any $\alpha\in q(L_v+u_0)$, there is $1\leq i_0 \leq n$ such that $$\alpha\in x_{i_0} + \pi_v^{a_{x_{i_0}}} \frak o_v . $$ Therefore
$$ \alpha U \subseteq (x_{i_0}+ \pi_v^{a_{x_{i_0}}} \frak o_v)(1+ \pi_v^a \frak o_v) \subseteq x_{i_0} + \pi_v^{a_{x_{i_0}}} \frak o_v  \subseteq q(L_v+u_0) $$ as required.  \end{proof}

By Lemma 3.2 in \cite{WX} or Lemma 4.2 in \cite{CO}, one can define the action of $GL_\A(V)$ on
$L+u_0$. 

\begin{definition} Let $L$ be an $\frak o_F$-lattice and $u_0\in V$. We define

$$gen(L+ u_0)= \text{the orbit of $(L+u_0)$ under the action of $SO_\A(V )$} $$ is called the genus of $(L+u_0)$.

$$spn(L+u_0)= \text{the orbit of $(L+u_0)$ under the action of $SO(V) \iota (Spin_\A (V))$} $$ is called the spinor genus of $(L+u_0)$.

$$cls(L+ u_0) = \text{the orbit of $(L+ u_0)$ under the action of $SO(V)$ }$$ is called the class of $(L+u_0)$.
\end{definition}

It is clear that
$$ cls(L+u_0)\subseteq spn(L+u_0) \subseteq gen(L+u_0) . $$

Set
$$SO(L+ u_0)=\{ \sigma \in SO(V): \ \sigma (L+ u_0)=(L+ u_0) \}$$ and
$$SO_\A(L+ u_0)=\{ \sigma \in SO_\A(V): \ \sigma
(L+ u_0)=(L+ u_0) \}$$ and
$$SO(L_v+ u_0)=\{ \sigma \in SO(V_v): \ \sigma
(L_v+ u_0)=(L_v+ u_0) \}$$  for any $v< \infty_F$.

\begin{prop} \label{numspn} If $rank(L)\geq 3$, then the number of spinor genera in $gen(L+u_0)$ is given by $$[\Bbb I_F: F^\times \prod_{v\in \Omega_F} \theta(SO(L_v+u_0))] $$
where $\theta$ is the map induced by (\ref{galois-coh}).
\end{prop}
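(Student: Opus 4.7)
\emph{Proof proposal.} My plan is to adapt the classical spinor-norm computation for lattices (cf.~102:9 in \cite{OM}) to the lattice-translation setting, realising the set of spinor genera in $gen(L+u_0)$ as a quotient of the idele class group through the adelic spinor norm.

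First I would interpret the spinor genera inside $gen(L+u_0)$ as a double coset space. By orbit--stabilizer applied to the definitions of $gen$ and $spn$, and using the action of $GL_{\A}(V)$ on $L+u_0$ furnished by Lemma~3.2 of \cite{WX}, these spinor genera are in bijection with
\[
SO(V)\,\iota(Spin_{\A}(V))\,\backslash\, SO_{\A}(V)\,/\, SO_{\A}(L+u_0),
\]
where $SO_{\A}(L+u_0)$ has the restricted-product structure $\prod_v' SO(L_v+u_0)$ (using that $u_0\in L_v$, hence $SO(L_v+u_0)=SO(L_v)$, for all but finitely many $v$).

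Next I would transport this to the idele class group via the spinor norm. From (\ref{galois-coh}) at each place one has a group homomorphism $\theta_v\colon SO(V_v)\to F_v^\times/(F_v^\times)^2$ with kernel $\iota(Spin(V_v))$; these assemble adelically into $\theta\colon SO_{\A}(V)\to \mathbb{I}_F/\mathbb{I}_F^2$. Since $\theta$ vanishes on $\iota(Spin_{\A}(V))$ and sends $SO(V)$ into the image of $F^\times$ in $\mathbb{I}_F/\mathbb{I}_F^2$, it descends to a map
\[
\bar\theta\colon SO(V)\iota(Spin_{\A}(V))\backslash SO_{\A}(V)/SO_{\A}(L+u_0)\;\longrightarrow\; \mathbb{I}_F \Big/ \Big(F^\times \cdot \prod_v \theta(SO(L_v+u_0)) \cdot \mathbb{I}_F^2\Big).
\]
Injectivity follows from $\ker(\theta)=\iota(Spin_{\A}(V))$, which is the exactness of (\ref{galois-coh}) assembled place-by-place. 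For surjectivity I would invoke $\rank(L)\geq 3$: on a non-degenerate quadratic space of rank $\geq 3$ over $F_v$ the form $q$ represents every class in $F_v^\times/(F_v^\times)^2$, so products of pairs of reflections $\tau_u\tau_w$, whose spinor norm is $q(u)q(w)$, exhaust $F_v^\times/(F_v^\times)^2$ and each $\theta_v$ is surjective. Since each $\theta(SO(L_v+u_0))$ already contains $(F_v^\times)^2$, the extra $\mathbb{I}_F^2$ factor is absorbed, and the target simplifies to $\mathbb{I}_F/(F^\times\prod_v\theta(SO(L_v+u_0)))$, whose cardinality is the asserted index.

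I expect the main obstacle to lie in the first step: carefully verifying that $SO_{\A}(L+u_0)$ really has the advertised restricted-product description with local stabilizers $SO(L_v+u_0)$, that the orbit--stabilizer bijection respects the coset action of Lemma~3.2 in \cite{WX} rather than merely the usual lattice action, and that $\theta(SO_{\A}(L+u_0))=\prod_v\theta(SO(L_v+u_0))$ so no local spinor-norm data is lost in the adelic picture. Once this bookkeeping for the lattice-translation $L+u_0$ is secured, the remaining steps are a direct translation of the classical lattice argument to the coset setting.
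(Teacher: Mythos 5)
Your proposal follows the same route as the paper: both identify the spinor genera in $gen(L+u_0)$ with the (double) coset space $SO(V)\iota(Spin_{\A}(V))\backslash SO_{\A}(V)/SO_{\A}(L+u_0)$, then apply the adelic spinor norm $\theta$ and invoke the classical O'Meara bijection (102:7) to land on $\Bbb I_F/F^\times\prod_v\theta(SO(L_v+u_0))$. The paper's proof is terser --- it cites O'Meara directly for the bijection --- whereas you unpack the injectivity (exactness of the spin sequence) and surjectivity (local surjectivity of $\theta_v$ for $\rank\geq 3$, i.e.\ O'Meara 91:6, which the paper itself uses a few lines later in Corollary \ref{spn}); the substance is the same.
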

\begin{proof} The number of spinor genera is given by
$$[SO_\Bbb A(V): SO(V) \iota(Spin_\Bbb A(V)) SO_\Bbb A(L+u_0)]. $$ Applying the map $\theta$ induced by (\ref{galois-coh}), one has the bijection
$$ SO_\Bbb A(V)/SO(V) \iota(Spin_\Bbb A(V)) SO_\Bbb A(L+u_0) \cong \Bbb I_F/F^\times \prod_{v\in \Omega_F} \theta(SO(L_v+u_0))$$ by 102:7 in \cite{OM}. \end{proof}

\begin{definition} Let $L_v$ be a lattice in $V_v$ and $x\in V_v$ for $v<\infty_F$. The coefficient $\frak C_x$ of $x$ in $L_v$ is defined as the following fractional ideal $$\frak C_x= \{ \alpha \in F_v: \ \alpha x\in L_v \}. $$

We call $x$ is primitive in $L_v$ if $\frak C_x=\frak o_v$ and denote $x\in^* L_v$.

If there is $x\in^* L_v$ such that $q(x)=\alpha$, we call $\alpha$ is represented primitively by $L_v$ and denoted by $\alpha \xrightarrow{*} L_v$.
\end{definition}

\begin{definition} \label{rep} Suppose $L$ is an $\frak o_F$-lattice in a quadratic space $V$ and $u_0\in V$ and $\alpha\in F$.

We call $\alpha$ is represented by $gen(L+u_0)$ (resp. $spn(L+u_0)$ and  $cls(L+u_0)$) denoted by
$$ \alpha\rightarrow gen(L+u_0) \ \ \ (\text{resp.} \ \  spn(L+u_0) \ \ \text{and} \ \ cls(L+u_0))  $$
if there is $(L'+u_0')\in gen(L+u_0)$ (resp. $spn(L+u_0)$ and $cls(L+u_0)$) such that $\alpha \in q(L'+u_0')$.

Let $T$ be a finite set of primes of $F$ such that $u_0\in L_v$ for $v\not\in T$. We call $\alpha$ is represented primitively by $gen(L+u_0)$ (resp. $spn(L+u_0)$ and  $cls(L+u_0)$) outside $T$ denoted by
$$ \alpha\xrightarrow{*}_T  gen(L+u_0) \ \ \ (\text{resp.} \ \  spn(L+u_0) \ \ \text{and} \ \ cls(L+u_0))  $$
if there is $(L'+u_0')\in gen(L+u_0)$ (resp. $spn(L+u_0)$ and $cls(L+u_0)$) and $x\in L'+u_0'$ such that $q(x)=\alpha$ and $x\in^* L'_v$ for $v\not\in T$.
\end{definition}

The following lemma extends some classical results for usual lattices to lattice translations.

\begin{lem} \label{x} Let $L$ be a lattice in $V$ with $rank(L)\geq 3$ and $u_0\in V$. Suppose $x\in L+u_0$ with $q(x)=\alpha$. One has the following bijections
$$ \aligned &  \{ spn(L'+u_0')\in gen(L+u_0): \alpha \rightarrow spn(L'+u_0') \}  \\
 \xrightarrow{\cong}    \ \ & X_\Bbb A(x, L+u_0) SO(V)\iota(Spin_\Bbb A(V)) /SO(V)\iota(Spin_\Bbb A(V))SO_\Bbb A(L+u_0) \\
 \xrightarrow{\cong}  \ \ & F^\times \theta(X_\Bbb A(x, L+u_0))/F^\times \prod_{v\in \Omega_F} \theta(SO(L_v+u_0))
 \endaligned $$
where $$ X_\Bbb A(x,L+u_0)= \{ \sigma_\Bbb A \in SO_\Bbb A(V): \ \ x\in \sigma_\Bbb A \circ (L+u_0) \} .$$ Moreover, if $T$ is a finite set of primes of $F$ such that $u_0\in L_v$ for $v\not\in T$, then one has the following bijections
$$ \aligned &  \{ spn(L'+u_0')\in gen(L+u_0): \alpha \xrightarrow{*}_T spn(L'+u_0') \}  \\
 \xrightarrow{\cong}    \ \ & X_T^*(x, L+u_0) SO(V)\iota(Spin_\Bbb A(V)) /SO(V)\iota(Spin_\Bbb A(V))SO_\Bbb A(L+u_0) \\
 \xrightarrow{\cong}  \ \ & F^\times \theta(X_T^*(x, L+u_0))/F^\times \prod_{v\in \Omega_F} \theta(SO(L_v+u_0))
 \endaligned $$
where $$ X_T^*(x,L+u_0)= \{ (\sigma_v) \in SO_\Bbb A(V): \ \ x\in (\sigma_v) \circ (L+u_0) \ \ \text{and} \ \ x\in^* \sigma_v L_v \ \ \text{for} \ v\not\in T \} .$$
\end{lem}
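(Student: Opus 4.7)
The plan is to construct the first bijection via an explicit map $\Psi\colon X_\Bbb A(x,L+u_0)\to\{spn(L'+u_0')\in gen(L+u_0):\alpha\to spn(L'+u_0')\}$ sending $\tau_\Bbb A\mapsto spn(\tau_\Bbb A(L+u_0))$, and then to transport the resulting double-coset description under the spinor norm $\theta$ of (\ref{galois-coh}). The primitive version is then handled by an identical argument with $X_T^*(x,L+u_0)$ in place of $X_\Bbb A(x,L+u_0)$.

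Well-definedness of $\Psi$ is immediate from $x\in\tau_\Bbb A(L+u_0)$ and $q(x)=\alpha$. For surjectivity, take any $spn(L'+u_0')$ in the target set and write $L'+u_0'=\sigma_\Bbb A(L+u_0)$ with $\sigma_\Bbb A\in SO_\Bbb A(V)$; by hypothesis there exist $\rho'\in SO(V)$, $\Phi_0\in\iota(Spin_\Bbb A(V))$, and $y\in\rho'\Phi_0\sigma_\Bbb A(L+u_0)$ with $q(y)=\alpha=q(x)$. Since $rank(L)\geq 3$, Witt's extension theorem provides an isometry in $O(V)$ sending $x$ to $y$; after multiplying by a reflection in an anisotropic vector of $x^\perp$, which exists because $x^\perp$ is non-degenerate of rank $\geq 2$, one obtains $\rho\in SO(V)$ with $\rho x=y$. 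Then $\tau_\Bbb A:=\rho^{-1}\rho'\Phi_0\sigma_\Bbb A$ lies in $X_\Bbb A(x,L+u_0)$ and satisfies $\Psi(\tau_\Bbb A)=spn(L'+u_0')$.

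For the fibres of $\Psi$, two elements $\tau_1,\tau_2\in X_\Bbb A(x,L+u_0)$ give the same spinor genus iff $\tau_2\in SO(V)\iota(Spin_\Bbb A(V))\,\tau_1\,SO_\Bbb A(L+u_0)$, so $\Psi$ identifies the source set with the middle double-coset quotient in the lemma. The second bijection then follows by applying $\theta$: since $\iota(Spin_\Bbb A(V))=\ker\theta$ and $\theta(SO(V))$ is the image of $F^\times$ in $\Bbb I_F$ modulo squares, the same invocation of 102:7 in \cite{OM} used in the proof of Proposition \ref{numspn} identifies this middle quotient with $F^\times\theta(X_\Bbb A(x,L+u_0))/F^\times\prod_v\theta(SO(L_v+u_0))$.

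The primitive version runs along identical lines: tracing through the Witt-theorem step, the primitivity condition $y\in^* L''_v$ for $v\not\in T$ transports via $y=\rho x$ and $L''_v=\rho_v\tau_v L_v$ to $x\in^*\tau_v L_v$ for $v\not\in T$, i.e., $\tau_\Bbb A\in X_T^*(x,L+u_0)$, and the remainder of the argument goes through verbatim. The main technical obstacle is arranging a determinant-one Witt isometry -- which is what forces the rank hypothesis $rank(L)\geq 3$ -- while everything else reduces to formal manipulation of double cosets under $\theta$.
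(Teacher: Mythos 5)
Your proof is correct and takes essentially the same route as the paper: Witt's theorem (with the standard reflection trick to land in $SO(V)$) supplies surjectivity, the double-coset analysis gives the fibres, and applying $\theta$ together with 102:7 of \cite{OM} yields the second bijection, with the primitive case traced through verbatim. The only cosmetic difference is that you define the map in the direction $X_\Bbb A(x,L+u_0)\to\{\text{spinor genera representing }\alpha\}$ and compute its fibres, whereas the paper sends each spinor genus to a coset and exhibits an explicit inverse.
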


\begin{proof} If $\alpha \rightarrow spn(L'+u_0')$, there are $\sigma\in SO(V)$ and $\tau_\Bbb A \in \iota(Spin_\Bbb A(V))$ such that $$x\in \sigma \tau_\Bbb A \circ (L'+u_0')$$ by Witt Theorem (see \cite{OM} \S 42 F). Since $spn(L'+u_0')\in gen(L+u_0)$, one has $\varrho_\Bbb A\in SO_\Bbb A(V)$ such that $\varrho_\Bbb A \circ (L+u_0)=L'+u_0'$. Therefore one has $$\sigma\tau_\Bbb A \varrho_\Bbb A \in X_\Bbb A(x,L+u_0) $$ and defines the first map by sending $$spn(L'+u_0') \mapsto \sigma\tau_\Bbb A \varrho_\Bbb A SO(V)\iota(Spin_\Bbb A(V))SO_\Bbb A(L+u_0) . $$
It is straight forward to verify that the above map is well-defined and the inverse map is given by sending
$$ \sigma_\Bbb A SO(V)\iota(Spin_\Bbb A(V))SO_\Bbb A(L+u_0) \mapsto spn(\sigma_\Bbb A \circ (L+u_0)) . $$
Applying $\theta$ in (\ref{galois-coh}), one obtains the second bijection by 102:7 in \cite{OM}.

If $\alpha \xrightarrow{*}_T spn(L'+u_0')$, then $x\in^* \sigma \tau_v (L'_v)$ for $v\not\in T$ in the above argument. Therefore $$\sigma\tau_\Bbb A \varrho_\Bbb A \in X_T^*(x,L+u_0) $$ and the rest of the proof follows from the exact same argument as above.
\end{proof}

\begin{cor} \label{spn} Let $L$ be a lattice in $V$ and $u_0\in V$ and $\alpha\in F^\times$. Suppose one of the following conditions holds

 i) $rank(L)\geq 4$; or  $rank(L)=3$ and $-\alpha \cdot det(L)\in (F^\times)^2$.

ii) $rank(L)= 3 $ and there is a non-dyadic finite prime $v_0$ such that $L_{v_0}$ is unimodular and $u_0\in L_{v_0}$ with
$ord_{v_0}(\alpha)>0$ and $-\alpha \cdot det(L) \not \in (F_{v_0}^\times)^2$.

If $\alpha \rightarrow gen(L+u_0)$, then every spinor genus in $gen(L+u_0)$ represents $\alpha$.
Moreover, if $T$ is the finite set of primes of $T$ such that $u_0\in L_v$ for $v\not\in T$ and $v_0\in T$ in case ii) and $\alpha \xrightarrow{*}_T gen(L+u_0)$, then every spinor genus in $gen(L+u_0)$ represents $\alpha$ primitively outside $T$.
\end{cor}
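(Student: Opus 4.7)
The plan is to convert each assertion into an idele-theoretic surjectivity statement via Lemma \ref{x}. Combining Lemma \ref{x} with Proposition \ref{numspn}, the statement ``every spinor genus in $gen(L+u_0)$ represents $\alpha$'' is equivalent to the equality
\[ F^\times \cdot \theta(X_\A(x,L+u_0)) \;=\; \Bbb I_F, \]
and similarly the primitive version becomes $F^\times \cdot \theta(X_T^*(x,L+u_0)) = \Bbb I_F$. So my task reduces to bounding $\theta(X_\A)$, respectively $\theta(X_T^*)$, from below at enough places. (The element $x$ itself is produced globally from the hypothesis $\alpha\rightarrow gen(L+u_0)$ via Witt's theorem, as in the proof of Lemma \ref{x}.)

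My strategy is to observe that for any $\sigma_v\in SO(V_v)$ fixing $x$, one has $\sigma_v\in X_v(x,L+u_0)$ directly, since $\sigma_v(x)=x\in L_v+u_0$. Thus the pointwise stabilizer of $x$ in $SO(V_v)$, which is the rotation group of the orthogonal complement $\langle x\rangle^\perp_v$, contributes to the $v$-component of $X_\A(x,L+u_0)$, and the required lower bound becomes the image of the local spinor norm on $SO(\langle x\rangle^\perp_v)$ -- a classical object (\cite{OM}, \S\S 91--92). In case (i), if $\rank(L)\geq 4$ then $\langle x\rangle^\perp$ has rank $\geq 3$ and is isotropic at every finite prime, so the spinor norm there is surjective onto $F_v^\times/(F_v^\times)^2$; if $\rank(L)=3$ and $-\alpha\det(L)\in(F^\times)^2$, the binary complement has square discriminant and is globally hyperbolic, again giving full local spinor norms at every finite place. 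The archimedean deficit is absorbed by $F^\times$ via the Hasse norm principle, yielding the desired idelic equality.

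Case (ii) is where I expect the main difficulty. Here the binary complement $\langle x\rangle^\perp$ may be anisotropic at several primes and the naive local spinor-norm bound is short of $F_v^\times/(F_v^\times)^2$ by an index-$2$ subgroup. The non-dyadic prime $v_0$ plays the role of rescuer: with $L_{v_0}$ unimodular, $u_0\in L_{v_0}$ (so the translation disappears at $v_0$), $ord_{v_0}(\alpha)>0$, and $-\alpha\det(L)\notin(F_{v_0}^\times)^2$, a primitive spinor-norm computation on the unimodular lattice $L_{v_0}$ supplies the missing local generator; the unit $-\alpha\det(L)$ being a non-square is precisely what forces the relevant extra coset to be hit. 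The primitivity hypothesis $x\in^*\sigma_v L_v$ for $v\notin T$ (with $v_0\in T$) is exactly what makes this unimodular calculation applicable in the primitive case. Combined with the arguments of case (i) at the remaining primes, this shows $F^\times\theta(X_\A)=\Bbb I_F$ (respectively $F^\times\theta(X_T^*)=\Bbb I_F$), and the corollary follows from Lemma \ref{x}.
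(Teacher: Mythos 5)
Your overall strategy is the same as the paper's: pass through Lemma \ref{x} and Proposition \ref{numspn} to reduce the statement to the idelic identity $F^\times\theta(X_{\Bbb A})=\Bbb I_F$ (resp. with $X_T^*$), and exploit $SO_{\Bbb A}(W)\subseteq X_{\Bbb A}(x,K+w_0)$ where $W=(Fx)^\perp$. Two small corrections first: the witness $x$ need not lie in $L+u_0$ but only in some $K+w_0\in gen(L+u_0)$, so one should apply Lemma \ref{x} to $K+w_0$ rather than to $L+u_0$ (the paper does exactly this); this is cosmetic but worth stating. And in the primitive case, $SO_{\Bbb A}(W)$ still lands in $X_T^*(x,K+w_0)$ because a rotation $\sigma_v$ fixing $x$ preserves the coefficient ideal of $x$ in $\sigma_vK_v$; you should say this rather than leave it implicit.

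The more serious issue is in your justification for case (i). You claim that if $\rank(L)\geq 4$ then $W$ has rank $\geq 3$ and is ``isotropic at every finite prime.'' This is false: ternary anisotropic quadratic spaces over $p$-adic fields exist (e.g.\ the pure-quaternion subspace of a division quaternion algebra). Fortunately, the conclusion you want—surjectivity of the local spinor norm $\theta:SO(W_v)\to F_v^\times/(F_v^\times)^2$ for $\dim W_v\geq 3$—does hold and does not depend on isotropy; this is precisely 91:6 in \cite{OM}, which the paper cites directly instead of arguing via isotropy. So the gap is in the reasoning, not the conclusion; please replace the isotropy claim with the correct citation. Finally, in case (ii) your last sentence has the role of primitivity backwards: since $v_0\in T$, the primitivity requirement $x\in^*\sigma_vL_v$ is imposed only at $v\notin T$ and hence places no constraint at $v_0$; the local computation at $v_0$ (the paper invokes Theorem 5.1 of \cite{HSX}, which determines $\theta(X_{v_0}(x,K+w_0))$ and shows it strictly exceeds $\theta(SO(W_{v_0}))$ under the hypotheses $L_{v_0}$ unimodular, $u_0\in L_{v_0}$, $ord_{v_0}(\alpha)>0$, $-\alpha\det L\notin(F_{v_0}^\times)^2$) is the same in both the ordinary and primitive statements. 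The primitivity hypothesis matters only at $v\notin T$ to ensure the passage from $X_{\Bbb A}$ to $X_T^*$.
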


\begin{proof} Since $\alpha \rightarrow gen(L+u_0)$, there are $(K+w_0)\in gen(L+u_0)$ and $x\in (K+w_0)$ such that $q(x)=\alpha$.
Let $V=Fx\perp W$. Then $$ SO_\Bbb A(W) \subseteq X_\Bbb A(x, K+w_0)  \ \ \ \text{and} \ \ \ [\Bbb I_F : F^\times \theta (SO_\Bbb A(W))] \leq 2 $$ by Lemma \ref{x} and 91:6 in \cite{OM}. Moreover
$$ [\Bbb I_F : F^\times \theta (SO_\Bbb A(W))] = 2 \ \ \ \text{if and only if} \ \ \ -det(W)\not\in (F^\times)^2 \ \ \ \text{and} \ \ \ dim(W)=2 $$
with
$$ \Bbb I_F / F^\times \theta (SO_\Bbb A(W)) \cong Gal(F(\sqrt{-det(W)})/F) $$ by the Artin map.

If condition $i)$ holds, one already has
$$  \Bbb I_F= F^\times \theta (SO_\Bbb A(W))=F^\times \theta(X_\Bbb A(x, K+w_0 ))$$ and every spinor genera in $gen(L+u_0)$ represents $\alpha$ by Lemma \ref{x} and Prop.\ref{numspn}.

If condition $ii)$ holds, one has
$$ F^\times \theta(SO_\Bbb A(W)) \neq F^\times \theta( X_\Bbb A(x, K+w_0)) \ \ \ \text{if and only if} \ \ \ \theta(SO_\Bbb A(W)) \neq \theta (X_\Bbb A(x, K+w_0)) $$ by (5.6) Proposition of Chapter VI in \cite{N}. By Theorem 5.1 in \cite{HSX} at $v_0$, one has
$$ \theta(X_{v_0}(x, K+w_0))\neq  \theta(SO(W_{v_0})) $$ where $X_{v_0}(x, K+w_0)$ is the $v_0$-component of $X_\Bbb A(x, K+w_0)$. Therefore one still has $$  \Bbb I_F=F^\times \theta(X_\Bbb A(x, K+w_0))$$ and every spinor genera in $gen(L+u_0)$ represents $\alpha$ by Lemma \ref{x} and Prop.\ref{numspn}.

If $\alpha \xrightarrow{*}_T gen(L+u_0)$, one can further assume that $x\in^* K_v$ for $v\not\in T$. Therefore $$SO_\Bbb A(W) \subseteq X_T^*(x, K+w_0)$$ and the result follows from the same argument as above by replacing $X_\Bbb A(x, K+w_0)$ with $X_T^*(x, K+w_0)$. \end{proof}

The following lemma extends some ideas for usual lattices in \cite{Xu} to lattice translations.

\begin{lem}\label{modify} Let $L$ be a lattice with $rank(L)=3$ in $V$ and $u_0\in V$. Suppose $v_0$ is a finite non-dyadic prime such that $L_{v_0}$ is unimodular with $u_0\in L_{v_0}$ and $t\in \frak o_F$ such that $$ord_v(t)=0 \ \ \ \text{and} \ \ \ (t-1)u_0 \in L_v $$ for all finite primes $v$ except $v=v_0$ and $-\alpha \cdot det(L)\in (F_{v_0}^\times)^2$. If $\alpha \rightarrow spn(L+u_0)$ and $t^{-2k}\alpha \rightarrow gen(L+u_0)$ for a positive integer $k$, then $t^{-2k}\alpha \rightarrow spn(L+u_0)$.

Moreover, if $T$ is a finite set of primes of $F$ containing $v_0$ such that $u_0\in L_v$ for $v\not\in T$ and $\alpha \xrightarrow{*}_T spn(L+u_0)$ and $t^{-2k}\alpha \rightarrow gen(L+u_0)$ for a positive integer $k$, then $t^{-2k}\alpha \xrightarrow{*}_T spn(L+u_0)$.
\end{lem}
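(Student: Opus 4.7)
The plan is to exploit the structure furnished by $\alpha\rightarrow spn(L+u_0)$: after fixing a global vector $x\in V$ with $q(x)=\alpha$, one obtains (via Witt's theorem and the definition of the spinor genus) an adelic element $\sigma_\Bbb A=g\,\iota(s_\Bbb A)\in SO(V)\iota(Spin_\Bbb A(V))$ with $x\in\sigma_v(L_v+u_0)$ at every $v$. I would set $y:=t^{-k}x$, so that $q(y)=t^{-2k}\alpha$, and then aim to construct a new adelic element $\xi_\Bbb A\in SO_\Bbb A(V)$ satisfying $y\in\xi_v(L_v+u_0)$ for every $v$ together with $\theta(\xi_\Bbb A)\in F^\times\prod_v\theta(SO(L_v+u_0))$. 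By Lemma \ref{x} this places $\xi_\Bbb A\circ(L+u_0)$ in $spn(L+u_0)$ and realizes $t^{-2k}\alpha\rightarrow spn(L+u_0)$.

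At every $v\neq v_0$, including the archimedean places, I would simply take $\xi_v:=\sigma_v$. The containment $y\in\sigma_v(L_v+u_0)$ is verified by writing $\sigma_v^{-1}(x)=\ell_v+u_0$ with $\ell_v\in L_v$ and noting that $\sigma_v^{-1}(y)=t^{-k}\ell_v+t^{-k}u_0$: since $ord_v(t)=0$ one has $t^{-k}\ell_v\in L_v$, and multiplying $(t-1)u_0\in L_v$ by $t^{k-1}+\cdots+1\in\frak o_v$ yields $(t^k-1)u_0\in L_v$, whence $(t^{-k}-1)u_0=-t^{-k}(t^k-1)u_0\in L_v$. At these places $\theta(\xi_v)=\theta(\sigma_v)=\theta(g)$ modulo squares, because $\iota(s_v)$ has trivial spinor norm, contributing only the global factor $\theta(g)\in F^\times$.

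The hard part will be the construction at $v_0$. There $u_0\in L_{v_0}$ gives $L_{v_0}+u_0=L_{v_0}$; the hypothesis $t^{-2k}\alpha\rightarrow gen(L+u_0)$ supplies, locally at $v_0$, some $z\in L_{v_0}$ with $q(z)=q(y)$, and Witt's theorem then yields $\eta\in SO(V_{v_0})$ with $\eta(z)=y$. I would set $\xi_{v_0}:=\rho\eta$ for a choice of $\rho\in SO(W_{v_0})$, where $W:=x^\perp=y^\perp$; since $\rho$ fixes $y$, $\xi_{v_0}^{-1}(y)=\eta^{-1}(y)=z\in L_{v_0}$ is automatic. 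The genuine obstruction is to force $\theta(\rho\eta)$ into the prescribed coset $\theta(g)\cdot\theta(SO(L_{v_0}+u_0))$ of $F_{v_0}^\times/(F_{v_0}^\times)^2$. Here the discriminant hypothesis enters decisively: from $V=\langle x\rangle\perp W$ one gets $-\det(W)\equiv -\alpha\det(L)$ modulo squares, which is a square at $v_0$ by assumption, so $W_{v_0}$ is a hyperbolic plane and 91:6 of \cite{OM} gives $\theta(SO(W_{v_0}))=F_{v_0}^\times/(F_{v_0}^\times)^2$. Any target value of $\theta(\rho)$ is therefore achievable, and an appropriate $\rho$ supplies the desired $\xi_{v_0}$.

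For the primitive variant the same $\xi_\Bbb A$ works. If $x\in^* L_{1,v}=\sigma_v(L_v)$ for $v\not\in T$, then $\sigma_v^{-1}(x)\in^* L_v$; since $v\not\in T$ forces both $u_0\in L_v$ and $v\neq v_0$ (as $v_0\in T$), the scalar $t^{-k}$ is a unit of $\frak o_v$, so $\frak C_{\sigma_v^{-1}(y)}=t^k\frak C_{\sigma_v^{-1}(x)}=\frak o_v$, giving $\sigma_v^{-1}(y)\in^* L_v$ and hence $y\in^*\xi_v(L_v)$ for all $v\not\in T$. This delivers $t^{-2k}\alpha\xrightarrow{*}_T spn(L+u_0)$.
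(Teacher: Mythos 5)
Your proof is correct, and at its core it relies on the same mechanism as the paper's: since $V=Fy\perp W$ with $-\det(W)\equiv-\alpha\cdot\det(L)$ modulo squares, the hypothesis $-\alpha\cdot\det(L)\in(F_{v_0}^\times)^2$ makes $W_{v_0}$ a hyperbolic plane, so $\theta(SO(W_{v_0}))=F_{v_0}^\times/(F_{v_0}^\times)^2$, and this surjectivity is precisely what lets one absorb the necessary local modification at $v_0$ without leaving the spinor genus. The paper packages the argument a bit differently: it first invokes Corollary~\ref{spn} to dispose of the case $-\alpha\det(L)\in(F^\times)^2$, then reduces without loss of generality to $x\in L+u_0$ (a class representative inside $spn(L+u_0)$ — this reduction is legitimate because the hypotheses on $v_0$ and on $t$ are preserved under moving within the spinor genus), constructs the auxiliary translate $K+w_0$ agreeing with $L+u_0$ away from $v_0$, and phrases the key step globally as $\theta((\sigma_v)^{-1})\in N_{E/F}(\Bbb I_E)\subseteq\theta(X_\Bbb A(y,K+w_0))$, with $E=F(\sqrt{-\alpha\det(L)})$ split at $v_0$. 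You skip both the initial dichotomy and the class normalization, carrying the adelic element $\sigma_\Bbb A=g\,\iota(s_\Bbb A)$ through explicitly and adjusting locally at $v_0$ by a rotation $\rho\in SO(W_{v_0})$ with prescribed spinor norm; this is the local avatar of the same fact ($\theta(SO_\Bbb A(W))=N_{E/F}(\Bbb I_E)$, with $v_0$ split so the $v_0$-component is all of $F_{v_0}^\times$). The bookkeeping is slightly heavier in your version, but it is uniform in $\alpha$ and equally rigorous; your treatment of the primitive variant, reusing $\xi_\Bbb A$ and observing that $t^{-k}$ is a unit away from $T$, is likewise the paper's argument in a different dress.
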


\begin{proof} By Corollary \ref{spn}, one only needs to consider $-\alpha \cdot det(L) \not \in (F^\times)^2$. Without loss of generality, we can assume that there is $x\in L+u_0$ such that $q(x)=\alpha$. Let $y=t^{-k} x$. Since $t^{-2k}\alpha \rightarrow gen(L+u_0)$, one has $\sigma_{v_0}\in SO(V_{v_0})$ such that $y\in \sigma_{v_0} (L_{v_0}+u_0)$. Define
$$\sigma_v = \begin{cases} \sigma_{v_0} \ \ \ & \text{if $v=v_0$} \\
1 \ \ \ & \text{otherwise} \end{cases} $$ and $K+w_0=(\sigma_v)_{v\in \Omega_F}\circ (L+u_0)$. Then $y\in K+w_0$ and the spinor genera in $gen(L+u_0)$ which represent $t^{-2k}\alpha$ are given by
$$ F^\times \theta(X_\Bbb A(y, K+w_0))/F^\times \prod_{v\in \Omega_F} \theta(SO(K_v+w_0)) $$ by Lemma \ref{x}. Let $E=F(\sqrt{-\alpha \cdot det(L) })$. Since $v_0$ splits completely in $E/F$, one has $$ \theta ((\sigma_v)_{v\in \Omega_F}^{-1}) \in N_{E/F}(\Bbb I_E) \subseteq \theta(X_\Bbb A(y, K+w_0)). $$ This implies that
$$(\sigma_v)_{v\in \Omega_F}^{-1}SO(V)\iota(Spin_\Bbb A(V))SO_\Bbb A(K+w_0) \in X_\Bbb A(y, K+w_0) SO(V)\iota(Spin_\Bbb A(V)) $$ and
$$ spn((\sigma_v)_{v\in \Omega_F}^{-1}\circ (K+w_0))= spn(L+u_0) $$ represents $t^{-2k}\alpha$ by Lemma \ref{x}.

If $\alpha \xrightarrow{*}_T spn(L+u_0)$, we can further assume that $x\in^* L_v$ for $v\not\in T$ which implies that $y\in^* K_v$ for $v\not\in T$ in the above argument. The result follows from the same argument as above by replacing $X_\Bbb A(x, K+w_0)$ with $X_T^*(x, K+w_0)$. \end{proof}

The following proposition is based on Kneser's arithmetic isotropic idea. Several variants have already appeared, for example Lemma 1.2 in \cite{HKK}.

\begin{prop}\label{arithiso}  Let $L$ be a lattice in $V$ with $dim(V)\geq 3$ and $u_0\in V$. Suppose $v_0$ is a non-dyadic finite prime such that $L_{v_0}$ is unimodular and $u_0\in L_{v_0}$.  Then there is a positive number $h=h(L,u_0, v_0)$ depending only on $L, u_0$ and $v_0$ such that

i) if $ \alpha \rightarrow spn(L+u_0)$ and $ord_{v_0}(\alpha)\geq h$ with $\alpha\in F^\times$, one has $\alpha \rightarrow cls(L+u_0)$.

ii) if $T$ is a finite set of primes of $F$ containing $v_0$ and satisfying $u_0\in L_v$ for $v\not\in T$ and $ \alpha \xrightarrow{*}_T spn(L+u_0)$ and $ord_{v_0}(\alpha)\geq h$ with $\alpha\in F^\times$, one has $\alpha \xrightarrow{*}_T cls(L+u_0)$.
\end{prop}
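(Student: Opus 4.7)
The statement is Kneser's arithmetic isotropy principle \cite{Kn} in the lattice-translate setting. The plan is to combine strong approximation for $Spin(V)$ off $v_0$ with a local analysis at $v_0$ that pins down the constant $h$ and handles the $v_0$-residual.

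Since $L_{v_0}$ is unimodular, non-dyadic, and of rank $\geq 3$, it contains a hyperbolic plane, so $V_{v_0}$ is isotropic and $Spin(V_{v_0})$ is non-compact; hence $Spin(V)$ is dense in $Spin_\A^{v_0}(V)$. Given $\alpha\to spn(L+u_0)$, take $y\in L'+u_0' \in spn(L+u_0)$ with $q(y)=\alpha$ and write $L'+u_0'=\sigma\,\iota(\pi_\A)(L+u_0)$ with $\sigma\in SO(V)$ and $\pi_\A\in Spin_\A(V)$. Set $w_v := \iota(\pi_v)^{-1}\sigma^{-1}(y)\in L_v+u_0$ for each $v$; in particular $w_{v_0}\in L_{v_0}$ represents $\alpha$ because $u_0\in L_{v_0}$. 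By strong approximation off $v_0$ choose $\tilde\pi\in Spin(V)$ with $\iota(\tilde\pi)_v^{-1}\iota(\pi_v)\in SO(L_v+u_0)$ for every $v\neq v_0$; setting $\sigma_1:=\sigma\,\iota(\tilde\pi)\in SO(V)$, the vector $\sigma_1^{-1}(y)$ lies in $L_v+u_0$ at every $v\neq v_0$, and what remains is to arrange this at $v_0$.

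At $v_0$ one has $\sigma_1^{-1}(y)=\eta_{v_0}(w_{v_0})$ where $\eta_{v_0}:=\iota(\tilde\pi)_{v_0}^{-1}\iota(\pi_{v_0})\in \iota(Spin(V_{v_0}))$ is \emph{not} controlled by the off-$v_0$ approximation. The constant $h$ is chosen from local data at $v_0$ so that, whenever $ord_{v_0}(\alpha)\geq h$, the hyperbolic plane in $L_{v_0}$ supplies enough Eichler transformations to factor $\eta_{v_0}=\eta_{v_0}'\,\tau$ with $\tau\in SO(L_{v_0})$ and $\eta_{v_0}'$ attainable as the $v_0$-component of a global element of $Spin(V)$ (via a second strong-approximation adjustment of $\tilde\pi$ localized near $v_0$) preserving $L_v+u_0$ for $v\neq v_0$. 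Absorbing $\eta_{v_0}'$ in this way and using that $\tau\in SO(L_{v_0})$ keeps the orbit inside $L_{v_0}$, one produces $x\in L+u_0$ with $q(x)=\alpha$, proving (i). For (ii), primitivity at $v\notin T$ is preserved because all the off-$v_0$ adjustments act through $SO(L_v)=SO(L_v+u_0)$, hence through $\frak o_v$-lattice automorphisms that carry primitive vectors to primitive vectors.

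The main obstacle is the local claim at $v_0$ that underwrites the choice of $h$: for $\alpha$ of sufficiently large $v_0$-valuation, the $\iota(Spin(V_{v_0}))$-translates of $L_{v_0}$ must cover every vector of norm $\alpha$ in $V_{v_0}$. This is routine when $\dim V\geq 4$, since the stabilizer of the representing vector has surjective spinor norm onto $F_{v_0}^\times/(F_{v_0}^\times)^2$ (its orthogonal complement is isotropic of dimension $\geq 3$); but in dimension $3$ with an anisotropic orthogonal complement there are two $\iota(Spin(V_{v_0}))$-orbits on $\{q=\alpha\}$, and both must meet $L_{v_0}$, which is precisely what the high-valuation assumption together with the hyperbolic plane inside $L_{v_0}$ will deliver.
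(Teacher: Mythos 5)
Your approach---move the representing vector by strong approximation off $v_0$, then repair the $v_0$-component by a local argument---is genuinely different from the paper's, but it has real gaps exactly where you flag the ``main obstacle.'' The local covering claim (that the $\iota(Spin(V_{v_0}))$-translates of $L_{v_0}$ cover every vector of norm $\alpha$, once $ord_{v_0}(\alpha)\geq h$) is not proved and is delicate: a vector of norm $\alpha$ with high $v_0$-valuation can still have an arbitrarily bad denominator at $v_0$, and absorbing that denominator requires a concrete argument using the hyperbolic plane and the structure of $SO(L_{v_0})$, not just the observation that an Eichler-type factorization ``will deliver.'' Worse, the step where you promote $\eta_{v_0}'$ to ``the $v_0$-component of a global element of $Spin(V)$ via a second strong-approximation adjustment'' is not available: strong approximation for $Spin(V)$ off $\{v_0\}\cup\infty_F$ gives you exact control at every place \emph{except} $v_0$ and no control at $v_0$ at all, so you cannot arrange the $v_0$-component of a global element to equal a prescribed $\eta_{v_0}'$. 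Finally, the claim that $h$ ``is chosen from local data at $v_0$'' cannot be right: the constant must also account for how far the other classes in $spn(L+u_0)$ are from $L+u_0$ and for the finitely many places where $u_0\notin L_v$ (and your proof never engages with those places at all).

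The paper avoids this entirely by a scaling argument in the spirit of Kneser and Hsia--Kitaoka--Kneser. It fixes $t_1\in\frak o_F$ supported at $v_0$ and chosen so that $t_1$ lies in a small enough neighborhood $U_v$ at every place $v$ with $u_0\notin L_v$ (Corollary \ref{neig}), and so that $t_1^{h_1}L_i\subseteq\sigma_i(L)$ and $t_1^{h_1}(u_i-\sigma_i u_0)\in\sigma_i(L)$ for each class representative $L_i+u_i$ of $spn(L+u_0)$---the integers $l_i$ exist because strong approximation off $\{v_0,\infty_F\}$ collapses all these classes after inverting $t_1$. Setting $h=2h_1\,ord_{v_0}(t_1)+1$, one scales $\beta=t_1^{-2h_1}\alpha$, checks $\beta\to gen(L+u_0)$ (O'Meara's local theory at $v_0$, Corollary \ref{neig} elsewhere), upgrades to $\beta\to spn(L+u_0)$ via Corollary \ref{spn} and Lemma \ref{modify}, realizes $\beta\in q(L_i+u_i)$ for some $i$, and then scales back: $\alpha\in q(t_1^{h_1}L_i+t_1^{h_1}u_i)\subseteq q(L+t_1^{h_1}u_0)=q(L+u_0)$, the last equality because $(t_1^{h_1}-1)u_0\in L$ by the choice of $U_v$. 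This also makes part (ii) essentially free, since the whole construction is multiplication by a scalar that is a unit away from $v_0$, so primitivity outside $T$ is manifestly preserved. You would need a separate argument to track primitivity, and the ``second adjustment'' step would make that even harder to control. I would recommend learning the scaling trick; it is cleaner, it produces the correct $h$ with the right dependencies, and it interacts well with the lattice translation $u_0$.
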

\begin{proof} Since the class number of $\frak o_F$ is finite, the suitable power of the prime $v_0$ is principal ideal. Then there is $t_0\in \frak o_F$ such that $ord_{v_0}(t_0)>0$ and $ord_v(t_0)=0$ for all finite $v\neq v_0$.

  Let $T_1=\{ v<\infty_F: \ u_0\not\in L_v \}$. For any $v\in T_1$, there is an open subgroup $U_v$ of $o_v^\times$ such that $\alpha U_v \subseteq q(L_v+u_0)$ for all $\alpha\in q(L_v+u_0)$ by Corollary \ref{neig}. Moreover, one can shrink $U_v$ if necessary such that $(1-\eta) u_0 \in L_v$ for all $\eta\in U_v$. Let $$h_0= \prod_{v\in T_1} [o_v^\times : U_v]  \ \ \ \text{and} \ \ \ t_1=t_0^{h_0}  .$$ Then $t_1\in U_v$ for all $v\in T_1$.

Fix the representative $L+u_0, \ L_1+u_1, \ \cdots, \ L_s+u_s$ of all classes in $spn(L+u_0)$. The strong approximation for the spin group off $S=\{\infty_F, v_0\}$ implies $$cls(L\otimes_{\frak o_F}  \frak o_S+u_0)=cls(L_i\otimes_{\frak o_F} \frak o_S+u_i)$$ for $1\leq i\leq s$ by the condition that $V_{v_0}$ is isotropic. There is $\sigma_i\in SO(V)$ such that $$ L_i\otimes_{\frak o_F} \frak o_S = \sigma_i (L\otimes_{\frak o_F}  \frak o_S )\ \ \ \text{and} \ \ \ u_i -\sigma_i (u_0) \in \sigma_i (L\otimes_{\frak o_F}  \frak o_S )  $$ for $1\leq i\leq s$. Therefore there is a positive integer $l_i$ such that $$ t_1^{l_i} L_i \subseteq \sigma_i(L) \ \ \ \text{and} \ \ \ t_1^{l_i}(u_i-\sigma_i(u_0))\in \sigma_i(L)  $$ for $1\leq i\leq s$. Let $$h_1= \max_{1\leq i\leq s} \{l_i \} \ \ \ \text{ and } \ \ \ h=2h_1 \cdot ord_{v_0}(t_1)+1 $$ which only depends on $L$, $u_0$ and $v_0$.

If $\alpha \rightarrow spn(L+u_0)$ and $ord_{v_0}(\alpha)\geq h$ for $\alpha \in F^\times$,  then $$\beta=t_1^{-2h_1}\alpha \rightarrow gen(L+u_0) $$ by Corollary \ref{neig} for $v\in T_1$ and \cite{OM58} for $v=v_0$.  We further have that $\beta \rightarrow spn(L+u_0)$ by Corollary \ref{spn} and Lemma \ref{modify}. There is $0\leq i\leq s$ such that $\beta \in q(L_i+u_i)$. Therefore
$$\alpha =\beta \cdot t_1^{2h_1}\in t_1^{2h_1} q(L_i+u_i)=q(t_1^{h_1} L_i+ t_1^{h_1} u_i )\subseteq q(L+t_1^{h_1} u_0 ) $$ and i) follows from the fact $(t_1^{h_1}-1)u_0 \in L$ by the choice of $U_v$ for $v\in T_1$.

If $\alpha \xrightarrow{*}_T spn(L+u_0)$ and $ord_{v_0}(\alpha)\geq h$ for $\alpha \in F^\times$,  then $$T_1\cup \{v_0\} \subseteq T \ \ \ \text{ and } \ \ \ \beta=t_1^{-2h_1}\alpha \xrightarrow{*}_T  gen(L+u_0) $$ by Corollary \ref{neig} for $v\in T_1$ and \cite{OM58} for $v=v_0$. By Corollary \ref{spn} and Lemma \ref{modify}, one has $\beta \xrightarrow{*}_T spn(L+u_0)$. There is $0\leq i\leq s$ and $x\in L_i+u_i$ with $x\in^* (L_i)_v$ for $v\not\in T$ such that $q(x)=\beta$. Therefore one has $\alpha=q(t_1^{h_1} x)$ with $$t_1^{h_1} x \in \sigma_i (L+u_0) \ \ \ \text{and} \ \ \ t_1^{h_1} x \in^* \sigma_i L_v $$ for all $v\not\in T$ and ii) follows. \end{proof}
\bigskip

\section{Strong approximation with Brauer-Manin obstruction}

Let $X$ be the affine variety over $F$ defined by the equation
  \begin{equation} \label{model} q(x_1, \cdots, x_n)=p(t) \end{equation}  where $q(x_1, \cdots, x_n)$ is a non-degenerate quadratic form over $F$ with $n\geq 3$ and $p(t)$ is a non-zero polynomial over $F$. Let $f: \widetilde{X}\rightarrow X$ be a resolution of singularities for $X$, i.e. $\widetilde{X}$ is a smooth, geometrically integral $F$-variety and $f$ is a proper and birational $F$-morphism such that $$f: \  f^{-1}(X_{smooth})\xrightarrow{\cong}  X_{smooth}$$ is isomorphism, where $X_{smooth}$ is the smooth locus of $X$.

  \begin{definition} A sign $s_v$ for a real place $v$ of $F$ is defined to be $1$ or $-1$. An element $\alpha \in F^\times $ is called to be with sign $s_v$ at $v$ if $s_v \cdot \alpha>0 $ in $F_v$.
  \end{definition}

 The following lemma is well-known and is regarded as the generalized Dirichlet arithmetic progression theorem over number fields, which is the immediate consequence of the Chebatorev density theorem and the class field theory.

\begin{lem}\label{gdap}
  Let $T$ be a finite set of non-archimedean primes of $F$ and $\alpha_v\in F_v$ for each $v\in T$. Fix a sign $s_v$ for each real place $v\in \infty_F$.  For any $\epsilon >0$, there are $\alpha \in F^\times$ and a non-archimedean prime $v_0$ outside $T$ of degree one such that

1) $|\alpha-\alpha_v|_v < \epsilon $ for all $v\in T$.

2) $\alpha$ is unit at any non-archimedean prime $v \not\in T \cup \{v_0\}$ and $ord_{v_0}(\alpha)=1$.

3) $s_v \cdot \alpha >0 $ in $F_v$.

\end{lem}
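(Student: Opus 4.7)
The plan is to prove this by class field theory and Chebotarev's density theorem, as the statement itself suggests. After replacing each vanishing $\alpha_v$ by a nonzero approximant (shrinking $\epsilon$ if necessary) one may assume $\alpha_v\in F_v^\times$ for every $v\in T$. The strategy is to encode conditions 1)--3) as membership in a single coset of an open subgroup of the idele class group, and then to realise the prime $v_0$ by a Chebotarev argument.

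First, for each $v\in T$ I will choose an integer $M_v\geq 1$ large enough that $\alpha_v\cdot(1+\pi_v^{M_v}\frak o_v)$ lies in the open $\epsilon$-ball about $\alpha_v$ in $F_v$, and introduce the open subgroup
$$W=\prod_{v\in T}(1+\pi_v^{M_v}\frak o_v)\times\prod_{\text{real }v\in\infty_F}\R_{>0}\times\prod_{\text{complex }v\in\infty_F}\Bbb C^\times\times\prod_{v\not\in T\cup\infty_F}\frak o_v^\times$$
of $\Bbb I_F$. The quotient $\Bbb I_F/F^\times W$ is the finite ray class group of modulus $\mathfrak m=\prod_{v\in T}\pi_v^{M_v}\frak o_v\cdot\prod_{\text{real }v}v$, corresponding by global class field theory to a finite abelian extension $H_{\mathfrak m}/F$. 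I will then form the idele $\mathbf a\in\Bbb I_F$ with $a_v=\alpha_v$ for $v\in T$, $a_v=s_v$ at each real place, and $a_v=1$ everywhere else.

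Next I apply Chebotarev's density theorem to $H_{\mathfrak m}/F$: there are infinitely many unramified primes of $F$ whose Frobenius in $\Gal(H_{\mathfrak m}/F)$ equals the image of $\mathbf a^{-1}$, and since primes of $F$ of residue degree $>1$ over $\Bbb Q$ have density zero, one may pick such a prime $v_0\not\in T$ of degree one. By Artin reciprocity, the idele $\mathbf j_{v_0}$ having a uniformizer at $v_0$ and $1$ at all other places represents the class $[\mathbf a^{-1}]$ in $\Bbb I_F/F^\times W$, so there exist $\alpha\in F^\times$ and $\mathbf w\in W$ with $\mathbf j_{v_0}\cdot\mathbf a=\alpha\cdot\mathbf w$.

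Reading this equality componentwise gives all three conclusions simultaneously: at $v\in T$ one gets $\alpha=\alpha_v w_v^{-1}$ with $w_v\in 1+\pi_v^{M_v}\frak o_v$, hence $|\alpha-\alpha_v|_v<\epsilon$; at $v_0$ one gets $ord_{v_0}(\alpha)=1$; at other finite places $\alpha\in\frak o_v^\times$; and at each real place $s_v\alpha>0$. The only delicate step is the design of the modulus $\mathfrak m$ and the subgroup $W$ so that a single ray-class equation simultaneously encodes approximation at $T$, exactly one prescribed new prime divisor, and the given real signs; the restriction that $v_0$ be of degree one is then handled by the standard density-zero statement for primes of higher residue degree.
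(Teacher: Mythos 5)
Your proof is correct and takes essentially the same approach as the paper: both encode conditions 1)--3) via an open subgroup $W = \prod U_v$ of the ideles (a ray-class modulus), pass to the corresponding finite abelian extension by class field theory, and use Chebotarev to find a degree-one prime $v_0$ whose Frobenius matches the class of the idele built from the $\alpha_v$ and the signs $s_v$. The only difference is cosmetic (the paper works with $i_v=\alpha_v^{-1}$ and then takes $(i_v)^{-1}$, whereas you work directly with $\mathbf a^{-1}$), and you make explicit the density-zero remark that guarantees $v_0$ can be taken of degree one.
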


\begin{proof} Without loss of generality, we can assume that $a_v\neq 0$ for $v\in T$ by replacing $a_v$ with $a_v'\in F_v^\times$ and $|a_v'|_v<\epsilon$ if necessary. Let $\Bbb R^+$ be the set of all positive reals. Choose a positive integer $c_v$ such that $$|\pi_v^{c_v}|_v <  |a_v|_v^{-1} \epsilon  $$ for any $v\in T$. By the class field theory, there is an abelian extension $E/F$ such that
$$ \Bbb I_F/(F^\times \cdot \prod_{v\in \Omega_F} U_v) \cong \Gal(E/F) $$ by Artin reciprocity law, where
$$ U_v = \begin{cases} \frak o_v^\times \ \ \ & \text{$v\not\in T\cup \infty_F$}  \\
1+\pi_v^{c_v} \frak o_v \ \ \ & \text{$v\in T$} \\
F_v^\times \ \ \ & \text{$v$ complex} \\
\Bbb R^+ \ \ \ & \text{$v$ real.} \end{cases} $$  Let
$$ i_v = \begin{cases} 1 \ \ \ & \text{$v\not\in T\cup \infty_F$}  \\
a_v^{-1} \ \ \ & \text{$v\in T$} \\
1 \ \ \ & \text{$v$ complex} \\
s_v \ \ \ & \text{$v$ real.} \end{cases} $$ Then $(i_v)_{v\in \Omega_F}\in \Bbb I_F$ and gives an element $\sigma$ in $Gal(E/F)$. By the Chebatorev density theorem, there is a finite prime $v_0$ of degree one outside $T$ such that the Frobenius of $v_0$ is $\sigma$. Therefore one has
$$ (j_v)_{v\in \Omega_F} \cdot (i_v)_{v\in \Omega_F}^{-1} \in F^\times \cdot \prod_{v\in \Omega_F} U_v $$ where
$$ j_v = \begin{cases}   \pi_{v_0}  \ \ \ & \text{$v=v_0$} \\
1 \ \ \ & \text{otherwise } \end{cases} $$ for all $v\in \Omega_F$. There is $\alpha\in F^\times$ such that
$$ \alpha^{-1} \cdot (j_v)_{v\in \Omega_F} \cdot (i_v)_{v\in \Omega_F}^{-1} \in \prod_{v\in \Omega_F} U_v $$ as required. \end{proof}

\begin{cor} \label{approx} Let $T$ be a finite set of non-archimedean primes of $F$ and $\alpha_v\in F_v$ for each $v\in T$. Fix a sign $s_v$ for each real place $v\in \infty_F$.  For any sufficiently small $\epsilon >0$ and any sufficiently large $C>0$, there are $\alpha \in F$ such that

1) $|\alpha-\alpha_v|_v < \epsilon $ for all $v\in T$.

2) $\alpha\in \frak o_v$ for all $v\not \in T\cup \infty_F$.

3) $s_v \cdot \alpha > C $ at each real place $v$ of $F$.

\end{cor}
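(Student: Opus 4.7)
The plan is to perturb the element produced by Lemma \ref{gdap} by an integral correction supported at the archimedean places. Lemma \ref{gdap} already delivers the non-archimedean approximation and the correct signs; only the size at the real places needs to be boosted. Concretely, I first apply Lemma \ref{gdap} to the data $(\alpha_v)_{v\in T}$, the prescribed signs $(s_v)$, and tolerance $\epsilon/2$. This yields $\alpha_0\in F^\times$ and an auxiliary finite prime $v_0\notin T$ satisfying $|\alpha_0-\alpha_v|_v<\epsilon/2$ for $v\in T$, $\alpha_0\in\frak o_v^\times$ for every $v\notin T\cup\{v_0\}\cup\infty_F$, $ord_{v_0}(\alpha_0)=1$, and $s_v\alpha_0>0$ at each real place $v$. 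Crucially, since $ord_{v_0}(\alpha_0)\geq 0$, we already have $\alpha_0\in\frak o_v$ at every finite $v\notin T$.

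Next I construct the correction. For each $v\in T$ choose $N_v$ large enough that $|\pi_v^{N_v}|_v<\epsilon/2$, and set $\frak a=\{x\in\frak o_F:\ ord_v(x)\geq N_v\ \text{for every}\ v\in T\}$, a nonzero ideal of $\frak o_F$. Under the archimedean embeddings, $\frak a$ is a full-rank lattice in $F_\infty:=\prod_{v\in\infty_F}F_v$, so by a standard Minkowski-type argument the intersection
$$ \frak a\ \cap\ \{z\in F_\infty:\ s_v\cdot z>C\ \text{for every real}\ v\} $$
is non-empty (in fact infinite): the lattice has dense projective directions, and any vector in the cone can be scaled so as to land in the lattice. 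Pick any such $M$, and define $\alpha:=\alpha_0+M$. For $v\in T$ the ultrametric inequality gives
$$ |\alpha-\alpha_v|_v\leq \max\{|\alpha_0-\alpha_v|_v,\ |M|_v\}<\epsilon, $$
yielding (1). At every finite $v\notin T$ both $\alpha_0$ and $M$ lie in $\frak o_v$, so $\alpha\in\frak o_v$, giving (2). At each real $v$, $s_v\alpha=s_v\alpha_0+s_vM>0+C=C$, giving (3).

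The argument presents no genuine obstacle: once Lemma \ref{gdap} is in hand, everything reduces to the routine Minkowski statement that a full-rank lattice meets every non-empty open cone. The only point worth stressing is that the auxiliary prime $v_0$ contributes no denominator to $\alpha_0$ (its $ord_{v_0}$ is $+1$, not negative), which is precisely what allows the additive correction to preserve integrality at all finite places outside $T$.
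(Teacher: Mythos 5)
Your proof is correct, and it shares the paper's basic skeleton (a base element from Lemma~\ref{gdap} plus an additive correction that is $\epsilon$-small at $T$, integral off $T$, and large with the right sign at each real place), but it produces the correction term differently. The paper simply invokes Lemma~\ref{gdap} a second time with the target values at $T$ set to $0$, obtaining a small $\alpha_2\in F^\times$ with $s_v\alpha_2>0$ at the real places, and then takes $\alpha=\alpha_1+k\alpha_2$ for a large positive integer $k$ (note $|k\alpha_2|_v\le|\alpha_2|_v<\epsilon$ since $|k|_v\le1$ at all finite $v$). You instead build the ideal $\frak a=\prod_{v\in T}\frak p_v^{N_v}$ and appeal to geometry of numbers to find $M\in\frak a$ landing in the region $\{z\in F_\infty:\,s_vz>C\}$. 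This works: since that region contains balls of arbitrarily large radius (in particular a ball whose radius exceeds the covering radius of the full-rank lattice $\frak a\subset F_\infty$), it necessarily contains a lattice point. The one phrase I would tighten is ``any vector in the cone can be scaled so as to land in the lattice'' --- one cannot scale an arbitrary real vector into a lattice; the clean way to say it is either the covering-radius argument just given, or: pick a nonzero $M_0\in\frak a$ whose projective direction is close enough to an interior direction of the cone $\{s_vz>0\}$ (such $M_0$ exists by density of lattice directions when $[F:\Q]\ge2$, and trivially when $F=\Q$), then take $M=kM_0$ with $k\in\Bbb Z_{>0}$ large; since $\frak a$ is an ideal, $kM_0\in\frak a$. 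Either patch makes the step rigorous. Your route is slightly heavier than the paper's double application of Lemma~\ref{gdap}, but it is a legitimate and self-contained alternative, and it makes transparent that only the size at $\infty_F$ needs to be repaired after the first application of the lemma.
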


\begin{proof} By Lemma \ref{gdap}, there are $\alpha_1$ and $\alpha_2$ in $F^\times$ such that
$$ \begin{cases} |\alpha_1-\alpha_v|_v < \epsilon  \ \ \text{and} \ \ |\alpha_2|_v<\epsilon \ \ \ & \text{for all $v\in T$} \\
  s_v \cdot \alpha_1 >0  \ \ \text{and} \ \ s_v \cdot \alpha_2 > 0 \ \ \ & \text{for all real places $v$} \\
  \alpha_1\in \frak o_v \ \ \text{and} \ \ \alpha_2 \in \frak o_v \ \ \ & \text{for all $v \not\in T\cup \infty_F$.}  \end{cases} $$

Let $k$ be a positive integer such that $s_v \cdot (\alpha_1+k \alpha_2) >C$ for all real places $v$ of $F$. Then $\alpha= \alpha_1+k \alpha_2$ is as required. \end{proof}

The main result of this section is the following theorem.

\begin{thm} \label{mainstr} If $\widetilde{X} (F_{\infty_F})$ is not compact, then strong approximation with Brauer-Manin obstruction off $\infty_F$ holds for $\widetilde{X}$.
\end{thm}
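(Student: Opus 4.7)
The approach is to fibre $\widetilde{X}$ over $\mathbb{A}^1_F$ by the $t$-coordinate, select a well-chosen rational value $t_0\in F$ in the base, and then solve the resulting quadratic Diophantine equation $q(x_1,\ldots,x_n)=p(t_0)$ in the fibre by means of the spinor-genus machinery of Section \ref{qde}. Fix an adelic point $(P_v)_v \in \widetilde{X}(\mathbb{A}_F)^{\Br(\widetilde{X})}$, a finite set $S\supset \infty_F$ of places, and open neighbourhoods $W_v\subset \widetilde{X}(F_v)$ of $P_v$ for $v\in S\setminus \infty_F$; after enlarging $S$ one may assume that $P_v\in f^{-1}(X_{\mathrm{smooth}})(F_v)$ for every $v\in S\setminus\infty_F$, that the lattice $L:=\mathfrak{o}_F^n\subset F^n$ is unimodular for $q$ outside $S$, and that $p(t)$ has no zero outside $S$.

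For Step~1 I apply Corollary \ref{approx} to the $t$-coordinates $t(P_v)\in F_v$ to produce $t_0\in F$ that is $v$-close to $t(P_v)$ for each $v\in S\setminus\infty_F$ and whose image $\alpha:=p(t_0)$ has, at each real place, a sign making the fibre $\{q=\alpha\}$ non-empty in $F_v^n$; this sign choice is possible exactly because $\widetilde{X}(F_{\infty_F})$ is non-compact. Combining with Lemma \ref{gdap} applied to $\alpha$, I arrange in addition a non-dyadic prime $v_0\notin S$ of degree one at which $\alpha$ has a prescribed large valuation $h$, is a unit at every other $v\notin S$, and satisfies $-\alpha\cdot\det(q)\notin (F_{v_0}^\times)^2$ in the critical rank-three anisotropic case. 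For Step~2, pick $u_0\in F^n$ that is close $v$-adically to $x(P_v)$ for $v\in S\setminus\infty_F$ and is integral outside $S$, with $q(u_0)$ close to $\alpha$. Lemma \ref{opencom} then yields local solutions $y_v\in L_v+u_0$ of $q(y_v)=\alpha$ arbitrarily close to $x(P_v)$ for $v\in S\setminus\infty_F$; archimedean solubility is ensured by Step~1; at every other $v$ primitive solubility of $q=\alpha$ over $L_v$ follows from unimodularity of $L_v$ with $\mathrm{rank}\geq 3$ (for $v\notin S\cup\{v_0\}$) or from \cite{OM58} (at $v_0$). Hence, with $T:=S\cup\{v_0\}$, one has $\alpha \xrightarrow{*}_T \mathrm{gen}(L+u_0)$.

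Step~3 passes from genus to spinor genus. Either $n\geq 4$, or $n=3$ with $-\alpha\cdot\det(q)\in (F^\times)^2$, in which case Corollary \ref{spn}~i) delivers $\alpha \xrightarrow{*}_T \mathrm{spn}(L'+u_0')$ for every spinor genus in $\mathrm{gen}(L+u_0)$; or $n=3$ with $-\alpha\cdot\det(q)\notin (F^\times)^2$, in which case the auxiliary prime $v_0$ meets condition ii) of Corollary \ref{spn} by construction, giving the same conclusion. This is where the Brauer--Manin hypothesis on $(P_v)$ is essential: in the rank-three anisotropic case the spinor-genus obstruction corresponds to a quaternion Brauer class on $\widetilde{X}$, typically of the form $(p(t),-\det(q))$, and its annihilation by $(P_v)$ is precisely what allows the Chebotarev construction of Lemma \ref{gdap} to deliver a prime $v_0$ compatible with the local data at $v\in S$. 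Step~4 then applies Proposition \ref{arithiso}, with $h$ taken above the threshold $h(L,u_0,v_0)$, upgrading the spinor-genus representation to a class representation: one obtains $x\in L+u_0$ with $q(x)=\alpha$, arbitrarily close to $x(P_v)$ for $v\in S\setminus\infty_F$, and the point $(x,t_0)\in X_{\mathrm{smooth}}(F)$ then lifts uniquely to the desired $F$-point of $\widetilde{X}$ lying in $W_v$ for every $v\in S\setminus\infty_F$.

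The main obstacle is Step~3 together with its coordination with Step~1. One must identify the Brauer classes on $\widetilde{X}$ whose vanishing against $(P_v)$ translates into the existence of a degree-one prime $v_0$ satisfying simultaneously the non-square condition on $-\alpha\cdot\det(q)$ and the valuation lower bound from Proposition \ref{arithiso}, and realise all these requirements through a single application of Corollary \ref{approx}/Lemma \ref{gdap} that also preserves the $v$-adic approximation at $S$ and the archimedean sign conditions enabled by non-compactness of $\widetilde{X}(F_{\infty_F})$. Once these choices are synchronised, the representation-theoretic input of Section \ref{qde} renders the remainder of the argument routine.
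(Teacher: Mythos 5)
Your overall architecture (choose $t_0$ approximating the $t$-coordinates of the adelic point, then solve $q=p(t_0)$ by spinor-genus machinery, passing genus $\to$ spinor genus $\to$ class, with Proposition~\ref{arithiso} closing the last step via a high valuation at an auxiliary prime) is the right shape, but several steps as you have described them do not work, and the crucial use of the Brauer--Manin hypothesis is misplaced.

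First, the archimedean analysis is incomplete. Corollary~\ref{approx} lets you make $s_v t_0 > C$ at every real place with any prescribed signs $s_v$. This is enough to control the sign of $p(t_0)$ at every real place only when $\deg p$ is odd. When $\deg p$ is even, $p(t_0)$ has the sign of the leading coefficient at every real place once $|t_0|_v$ is large, and one cannot independently choose signs. The paper therefore splits into cases: for even degree it forces $|t_0|_{v_0}$ to be large at a \emph{single} non-compact real place $v_0$ by a product-formula argument (after making $|t_0|_v$ small at all the other archimedean and $T$-adic places), and then checks that the positivity of $c\cdot p(t_0)$ at $v_0$ combined with non-compactness at $v_0$ yields local solubility there, while at the other real places local solubility is inherited from proximity to the given adelic point. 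Your appeal to ``sign choice is possible exactly because $\widetilde{X}(F_{\infty_F})$ is non-compact'' obscures this and would not go through as written in the even-degree case.

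Second, the construction of the auxiliary prime is logically backwards. You say you ``apply Lemma~\ref{gdap} to $\alpha$'' after having chosen $t_0$, to arrange a prime $v_0$ at which $\alpha=p(t_0)$ has large valuation. But once $t_0$ is fixed, $\alpha$ is determined and has some fixed factorisation; Lemma~\ref{gdap} produces a new element of $F^\times$, not a prime at which a previously fixed element has large valuation. The paper fixes the auxiliary prime \emph{first}: it invokes Chebotarev to produce $v_1\notin T$ such that $p$ has a root in $F_{v_1}$, then picks $t_{v_1}\in\frak o_{v_1}$ near that root with $\mathrm{ord}_{v_1}(p(t_{v_1}))>h$, and only \emph{then} runs Corollary~\ref{approx} with $v_1$ added to the approximation set. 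This ordering is essential and is missing from your plan.

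Third, and most seriously, you route the genus-to-spinor-genus step through Corollary~\ref{spn}~ii), requiring $-\alpha\cdot\det(q)\notin(F_{v_0}^\times)^2$ at the auxiliary prime. This cannot always be arranged: e.g.\ if $p(t)=c\, g(t)^2$ with $-c\det(q)\notin (F^\times)^2$, then $-\alpha\det(q)$ is a fixed non-square times a square for every $t_0$, and no choice of $t_0$ and $v_0$ with $\mathrm{ord}_{v_0}(\alpha)$ odd is available. More to the point, if Corollary~\ref{spn}~ii) were always applicable one would conclude strong approximation off $\infty_F$ with \emph{no} Brauer condition, which is false when $n=3$ and $Br(\widetilde X)/Br(F)$ is nontrivial. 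The paper's actual passage from $p(t_0)\to gen(L+u_0)$ to $p(t_0)\to spn(L+u_0)$ is via the Brauer--Manin pairing (Proposition~5.6(c) and Lemma~4.4 of \cite{CTX1}, the continuity and functoriality of the pairing from \cite{Sko}, and the argument of Proposition~7.3 in \cite{CTX}): the hypothesis $(P_v)\in\widetilde X(\mathbb A_F)^{Br}$ is precisely what forces the adelic solution into the spinor genus of $L+u_0$. Your paragraph gesturing at a quaternion class $(p(t),-\det q)$ is in the right spirit but does not substitute for this; in particular, the Brauer condition does not help you ``deliver a prime $v_0$ compatible with the local data''---the prime $v_1$ exists unconditionally, and the Brauer condition is used elsewhere.

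Finally, a smaller remark: Proposition~\ref{arithiso}~i) (not~ii)) is what is used at the end; there is no need to carry a primitivity constraint in the statement of the theorem, and introducing $T=S\cup\{v_0\}$ in Definition~\ref{rep} notation is an unnecessary detour.
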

\begin{proof} By Proposition 3.4 and Theorem 6.4 in \cite{CTX1}, one only needs to consider the case that $F$ is totally real and $q$ is definite over $F_v$ for all $v\in \infty_F$. Let $c$ be the leading coefficient of $p(t)$.

Fix $\xi\in Br(\widetilde{X})$ such that $\xi$ gives the generator of $Br(\widetilde{X})/Br(F)$ by Proposition 5.6 in \cite{CTX1}. Let $V$ be a quadratic space over $F$ defined by $q$. If $W$ is an open subset of $\widetilde{X}(\Bbb A_F)$ such that
$$\widetilde{X}(\Bbb A_F)^{Br(\widetilde{X})} \cap W \neq \emptyset ,$$  there is a finite subset $T$ of $\Omega_F$ containing $\infty_F$ and all dyadic primes in $F$ such that

1).  $T$ contains all denominators of coefficients of $q(x_1, \cdots, x_n)$ and $p(t)$.

2). (\ref{model}) defines an $\frak o_T$-model of $X$ and a proper morphism of $\frak o_T$-model $f: \bf {\widetilde{X}} \rightarrow \bf{X} $ extends the morphism $f: \widetilde{X}\rightarrow X$.

3). $\xi$ takes the trivial value over ${\bf {\widetilde{X}}}(\frak o_v)$ for all $v\not\in T$.

4). there is an non-empty open subset $U_v$ of $X(F_v)_{cent}$ for each $v\in \Omega_F$ satisfying
$$ f^{-1}(\prod_{v\in \Omega_F} U_v) \subseteq W   \ \ \  \text{and} \ \ \ \widetilde{X}(\Bbb A_F)^{Br(\widetilde{X})} \cap f^{-1}(\prod_{v\in \Omega_F} U_v) \neq \emptyset . $$ Moreover, one can choose $U_v$ as follows
$$ U_v= \{(x_v,t_v)\in X(F_v) : \ x_v\in L_v+u_0  \ \ \text{and} \ \ |t_v-b_v|_v<\epsilon \}  $$
for all $v\in T\setminus \infty_F$ with sufficiently small $\epsilon>0$, where $L$ is a lattice in $V$ and $u_0\in V$ and $b_v\in F_v$ with $p(b_v)\neq 0$  and $u_0\in L_v$ for all $v\not\in T$ by Chinese Remainder Theorem and
$$  U_v ={\bf X}(\frak o_v) \cap X(F_v)_{cent}= \{(x_v,t_v)\in X(F_v)_{cent} : \ x_v \in L_v  \ \ \text{and} \ \ t_v\in \frak o_v \} $$
for all $v\not \in T$ by Lemma 8.1 and Lemma 8.3 in \cite{CTX1}, where $X(F_v)_{cent}$ is the closure of $X_{smooth}(F_v)$ in $X(F_v)$. By increasing $T$ if necessary, we also can assume that $ord_v(det(L))=0$ for all $v\not\in T$.

Let $$ P\in  \widetilde{X}(\Bbb A_F)^{Br(\widetilde{X})} \cap f^{-1}(\prod_{v\in \Omega_F} U_v) \ \ \ \text{and} \ \ \ f(P)=(a_v, b_v)_{v\in \Omega_F} \in X(\Bbb A_F) \cap \prod_{v\in \Omega_F} X(F_v)_{cent}.  $$ By the Chebatorev density theorem, there is $v_1\not \in T$ such that $p(t)$ has a root in $F_{v_1}$. Choose $t_{v_1}\in \frak o_{v_1}$ such that $p(t_{v_1})\neq 0$ and $ord_{v_1}(p(t_{v_1}))>h$ where $h=h(L,u_0, T, v_1)$ is the positive number given in Proposition \ref{arithiso}.

If $deg(p(t))$ is odd, one can choose the sign $s_v$ for all real places $v\in \infty_F$ as follows
$$ s_v = \begin{cases} 1 \ \ \ & \text{if $c\cdot q$ is positive definite at $v$} \\
-1 \ \ \ & \text{otherwise} \end{cases} $$ and the sufficiently large constant $C>0$ such that $p(t)$ and $c\cdot t$ have the same sign in $F_v$ if $|t|_v >C$ for all $v\in \infty_F$.
For sufficiently small $\epsilon >0$, there is $t_0\in F$ such that
$$ \begin{cases} |t_0-b_v|_v<\epsilon \ \ \ &  \text{$v\in T\setminus \infty_F$} \\
  |t_0-t_{v_1}|_{v_1}<\epsilon \ \ \ & \text{$v=v_1$} \\
   t_0\in \frak o_v \ \ \ & \text{$v\not\in T\cup \{v_1\}$} \\
   s_v \cdot t_0 >C \ \ \ & \text{$v\in \infty_F$} \end{cases} $$
 by Corollary \ref{approx}. By Lemma 4.2 in \cite{CTX1} for $v\not\in T$ and the inverse function theorem (see Theorem 3.2 in \cite{PR}) for $v\in T\setminus \infty_F$ and the choice of sign function $s_v$ for $v\in \infty_F$, one obtains $$p(t_0) \rightarrow gen(L+u_0). $$

Otherwise $deg(p(t))$ is even. Since $X(F_{\infty_F})$ is not compact, there is $v_0\in \infty_F$ such that $X(F_{v_0})$ is not compact. Choose a sufficiently large constant $C>0$ such that $c\cdot p(t)>0$ in $F_{v_0}$ if $|t|_{v_0}>C$.  Choose $v_2\not \in T\cup \{v_1\} $ and $t_{v_2}\in \frak o_{v_2}$ with $t_{v_2}\neq 0$ such that $$|t_{v_2}|_{v_2} < C^{-1} |t_{v_1}|_{v_1}^{-1} \prod_{v\in T\setminus \{v_0\}} (|b_{v}|_{v}+1)^{-1} .$$
Applying Chinese Remainder Theorem, one obtains $t_0\in F$ such that
$$ \begin{cases} |t_0-b_v|_v<\epsilon \ \ \ &  \text{$v\in T\setminus \{v_0\}$} \\
  |t_0-t_v|_v<\epsilon \ \ \ & \text{$v\in \{v_1, v_2 \}$} \\
   t_0\in \frak o_v \ \ \ & \text{$v\not\in T\cup \{v_1, v_2 \}$.}  \end{cases} $$
By the product formula, one has
$$ 1 \leq  |t_0|_{v_0}\cdot |t_0|_{v_1} \cdot |t_0|_{v_2}\cdot \prod_{v\in T\setminus \{v_0\}} |t_0|_v < |t_0|_{v_0}\cdot C^{-1}. $$ This implies that $c\cdot p(t_0)>0$ in $F_{v_0}$. Since $X(F_{v_0})$ is not compact, one concludes that $c\cdot q$ is positive definite at $v_0$. Therefore one has $$p(t_0) \rightarrow gen(L+u_0)$$ by Lemma 4.2 in \cite{CTX1} for $v\not\in T$ and the inverse function theorem (see Theorem 3.2 in \cite{PR}) for $v\in T\setminus \{ v_0 \}$ and the above property at $v_0$.

Moreover, one has
$$p(t_0) \rightarrow spn(L+u_0) $$ by Proposition 5.6 (c) and Lemma 4.4 in \cite{CTX1} and the functoriality and continuality of Brauer-Manin pairing (see (5.3) in \cite{Sko}) and the proof Proposition 7.3 in \cite{CTX}. Therefore
$$ p(t_0) \rightarrow cls (L+u_0) $$ by Proposition \ref{arithiso} i). Since all points in the fiber of $X$ over $t=t_0$ are smooth, the morphism $f$ induces $$ \widetilde{X}_{t_0} \cong X_{t_0} $$ defined by  $q(x_1, \cdots, x_n)=p(t_0)$. This implies that
$$ \widetilde{X}_{t_0}(F) \cap f^{-1}(\prod_{v\in \Omega_F} U_v) \neq \emptyset $$
and the proof is complete.  \end{proof}

\begin{rem} The $D$ defined by (2.2) in \cite{Wat} is equal to $p(t)$ up to a constant in $\Bbb Q^\times$. When $n\geq 4$, one has $Br( \widetilde{X})=Br(F)$ and strong approximation off $\infty_F$ holds for $ \widetilde{X}$ or by the same line of the proof of Theorem \ref{mainstr}. Therefore the local-global principle holds for (\ref{watson}) with $n\geq 4$, which is equivalent to Theorem 1 and Theorem 2 in \cite{Wat}. The most interesting case is that $n=3$. The explicit computation for $Br(\widetilde{X})/Br(F)$ is given by Proposition 5.6 in \cite{CTX1}. The condition of Theorem 3 in \cite{Wat} is that $p(t)$ has a root over $\Bbb Q_p$ for almost all primes $p$. This condition implies that $Br(\widetilde{X})=Br(F)$ by Remark 6.6 in \cite{CTX1}. Therefore strong approximation off $\infty_F$ holds for $ \widetilde{X}$ with $n=3$ under such a condition. This implies that the local-global principle holds for (\ref{watson}) with $n=3$ under such a condition, which is equivalent to Theorem 3 in \cite{Wat}.
\end{rem}

\bigskip

\section{Representations of definite quadratic polynomials}

By using the circle method as pointed out in \cite{Wat60}, one can prove the representability of definite quadratic polynomials with more than three variables. On the other hand, primitive representations of definite quadratic forms of more than four variables with congruence conditions is proved in Theorem 2.1 of \cite{HJ} by an arithmetic method. In this section, we will point out that this two statements are equivalent to each other and extend them over number fields based on the results in \S \ref{qde}.

\begin{definition} Let $V$ be a non-degenerated quadratic space over $F$ and $w\in \infty_F$ such that $V_w$ is positive definite and $T$ be a finite subset containing all dyadic primes with $T\subset (\Omega_F\setminus \infty_F)$.

\underline{Statement (CC) with respect to $T$}: \ If $L$ is a lattice in $V$  satisfying that $L_v$ is unimodular for all $v\not\in T$ and $s$ is a positive integer, there is a constant $c=c(L,T,s)$ depending only on $L$, $T$ and $s$ such that for any $\alpha \in F$ satisfying

1) $\alpha \rightarrow gen(L)$

2) $x_v\in L_v$ with $q(x_v)= \alpha$ for $v\in T$ and $x_v \in^* L_v$ when $V_v$ is anisotropic with $v\in T$

3) $\alpha > c$ at $w$
\newline
one has $x\in L$ with $q(x)=\alpha$ such that
$$ \begin{cases}  x\equiv x_v \mod \pi_v^sL_v \ \ \ &  \text{for $v\in T$} \\
 x\in^*L_v  \ \ \ & \text{for $v\not\in T$}
\end{cases} $$

\bigskip

\underline{Statement (LT) with respect to $T$}: \ If $L$ is a lattice in $V$  and $u_0\in V$ satisfying that $L_v$ is unimodular and $u_0\in L_v$ for all $v\not\in T$, there is a constant $c=c(L,T,u_0)$ depending only on $L$, $T$ and $u_0$ such that for any $\alpha \in F$ satisfying

1) $\alpha \rightarrow gen(L+u_0)$

2) $\alpha \xrightarrow{*}  L_v$ when $V_v$ is anisotropic and $u_0\in L_v$ for $v\in T$

3) $\alpha > c$ at $w$
\newline
one has $x\in L+u_0$ with $ x\in^*L_v$ for $v\not\in T$ such that $q(x)=\alpha$.
\end{definition}

\begin{prop} Let $V$ be a non-degenerated quadratic space over $F$ and $w\in \infty_F$ such that $V_w$ is positive definite and $T$ be a finite subset with $T\subset (\Omega_F\setminus \infty_F)$. Then Statement (CC) with respect to $T$ holds if and only if Statement (LT) with respect to $T$ holds.
\end{prop}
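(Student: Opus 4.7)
\smallskip
\noindent\textbf{Proof plan.} The two statements differ only in how the local constraint on $x$ is phrased: membership in a lattice translate $L+u_0$ versus a congruence $x\equiv x_v\pmod{\pi_v^s L_v}$. The plan is to convert between them by a change of lattice---enlargement in one direction, shrinking with translation in the other---so that the constant of the target statement will furnish the constant of the source statement after a maximum over finitely many admissible auxiliary choices.

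\smallskip
\noindent\textbf{(CC)$\Rightarrow$(LT).} Given $(L,u_0)$, I will pass to $K=L+\mathfrak{o}_F u_0$, which agrees with $L$ at every $v\notin T$ (so $K_v$ is unimodular outside $T$) and contains $u_0$. Choose $s$ with $\pi_v^s K_v\subseteq L_v$ for all $v\in T$. The genus hypothesis in (LT) supplies local $x_v\in L_v+u_0\subseteq K_v$ with $q(x_v)=\alpha$, and a short direct check gives $(L_v+u_0)\cap\pi_v K_v=\emptyset$ whenever $u_0\notin L_v$, so $x_v$ is automatically primitive in $K_v$ in that case; when $u_0\in L_v$ one has $K_v=L_v$ and the (LT) primitivity hypothesis supplies $x_v\in^* L_v=K_v$. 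Applying (CC) to $K$ with parameter $s$ yields $x\in K$ with $q(x)=\alpha$, $x\equiv x_v\pmod{\pi_v^s K_v}$, and $x\in^* K_v=L_v$ for $v\notin T$. Since $\pi_v^s K_v\subseteq L_v$ and $x_v\equiv u_0\pmod{L_v}$, one deduces $x\equiv u_0\pmod{L_v}$ for $v\in T$, hence $x\in L+u_0$ globally. Set $c(L,T,u_0):=c(K,T,s)$.

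\smallskip
\noindent\textbf{(LT)$\Rightarrow$(CC).} Given $(L,T,s)$ and local data $(x_v)_{v\in T}$, I will introduce the lattice
\[
\tilde L_v=\begin{cases}\pi_v^s L_v & v\in T,\\ L_v & v\notin T,\end{cases}
\]
and choose $u_0\in L$ with $u_0\equiv x_v\pmod{\pi_v^s L_v}$ for $v\in T$ via the Chinese remainder theorem. Then $\tilde L_v+u_0$ contains $x_v$ for $v\in T$ and equals $L_v$ for $v\notin T$, so $\alpha\to gen(\tilde L+u_0)$ follows from $\alpha\to gen(L)$. For $v\in T$ with $V_v$ anisotropic, the primitivity $x_v\in^*L_v$ combined with $s\geq 1$ forces $x_v\notin\pi_v^s L_v$, whence $u_0\notin\tilde L_v$ and the (LT) primitivity hypothesis for $\tilde L$ is vacuous at such $v$. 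Applying (LT) to $(\tilde L,u_0)$ then produces $x\in\tilde L+u_0$ with $q(x)=\alpha$ and $x\in^*\tilde L_v=L_v$ for $v\notin T$; the condition $x-u_0\in\pi_v^s L_v$, combined with $u_0\equiv x_v\pmod{\pi_v^s L_v}$, gives the desired $x\equiv x_v\pmod{\pi_v^s L_v}$.

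\smallskip
\noindent\textbf{Main obstacle.} The principal technical point is uniformity of the constant in the second direction, since $u_0$ depends on the local data $(x_v)_{v\in T}$, a priori making $c(\tilde L,T,u_0)$ vary with $\alpha$. I will resolve this by observing that the residue tuple $(x_v\bmod\pi_v^s L_v)_{v\in T}$ ranges over a finite set of cardinality $\prod_{v\in T}|L_v/\pi_v^s L_v|$, and that each residue class admits a representative $u_0\in L$ of bounded archimedean norm; defining $c(L,T,s)$ to be the maximum of $c(\tilde L,T,u_0)$ over these finitely many representatives produces the required uniform constant depending only on $L$, $T$, and $s$.
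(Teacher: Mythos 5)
Your proof is correct and uses the same constructions as the paper: enlarging to $K=L+\frak o_F u_0$ for (CC)$\Rightarrow$(LT), and shrinking to the lattice with local components $\pi_v^sL_v$ at $v\in T$ together with a CRT-chosen $u_0$ for (LT)$\Rightarrow$(CC). You fill in two details the paper leaves implicit --- the automatic primitivity $x_v\in^* K_v$ when $u_0\notin L_v$ (so that hypothesis~(2) of (CC) is met at anisotropic places), and the finiteness of the residue tuples $(x_v\bmod\pi_v^s L_v)_{v\in T}$ which is what makes the constant produced in the backward direction depend only on $(L,T,s)$ rather than on $\alpha$ --- both of which are genuine points worth spelling out.
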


\begin{proof} $(\Rightarrow)$ Let $K= L+\frak o_F u_0$ be a lattice in $V$. Since $\alpha \rightarrow gen(L+u_0)$, there is $x_v\in L_v$ such that $q(x_v+u_0)=\alpha$ for $v\in T$. Choose a positive integer $s$ so large such that $\pi_v^s u_0 \in L_v$ for all $v\in T$. By Statement (CC) with respect to $T$, there is $x\in K$ with $q(x)=\alpha$ such that $ x\equiv x_v+u_0  \mod \pi_v^sK_v$ for $v\in T$ and $x\in^*K_v$ for $v\not\in T$. Since $\pi_v^sK_v \subseteq L_v$ for all $v\in T$ and $K_v=L_v$ for all $v\not\in T$, one concludes that $x\in L+u_0$ as required.

$(\Leftarrow)$ Conversely, define a lattice $K$ in $V$
$$  \begin{cases}  K_v= \pi_v^sL_v \ \ \ & v\in T \\
 K_v= L_v  \ \ \ & v\not\in T
\end{cases} $$ and $u_0\in V$ satisfying $$ \begin{cases} u_0-x_v \in \pi_v^sL_v \ \ \ & v\in T \\ u_0\in L_v \ \ \ & v\not\in T \end{cases}$$ by Chinese Remainder Theorem. By Statement (LT) with respect to $T$, there is $x\in K+u_0$ with $ x\in^*K_v$ for $v\not\in T$ such that $q(x)=\alpha$. Then
$$ \begin{cases}  x\equiv u_0 \equiv x_v \mod \pi_v^sL_v \ \ \ & \text{for $v\in T$} \\
 x\in^*L_v  \ \ \ & \text{for $v\not\in T$}
\end{cases} $$ by $K_v=L_v$ for $v\not \in T$.  \end{proof}

By applying Lemma \ref{gdap}, one can obtain the following approximation result which was first proved by Kitaoka for usual lattices over $\Bbb Z$ in \cite{Ki} (see also Lemma 1.6 in \cite{HKK} and Theorem 6.2.1 in \cite{Ki1}).

\begin{lem} \label{cs} Suppose $V$ is a non-degenerated quadratic space over $F$ such that $dim(V)\geq 3$ or $dim(V)=2$ with $-det(V)\not\in (F^\times)^2$. Let $L$ be a lattice in $V$ and $u_0\in V$ and $T$ be a finite set of non-archimedean primes of $F$ such that $T$ contains all dyadic primes and $L_v$ is unimodular  and $u_0\in L_v$ for $v\not \in T$. Take $u_v\in L_v+u_0$ for each $v\in T$. For any $\epsilon >0$, there is $u\in L+u_0$ such that

1) $|u-u_v|_v < \epsilon $ for all $v\in T$

2) there is a finite prime $v_0$ outside $T$ satisfying that $q(u)$ is unit at any non-archimedean prime $v \not\in T \cup \{v_0\}$ and $ord_{v_0}(q(u))=1$.

\end{lem}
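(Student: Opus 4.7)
The plan is to apply Lemma \ref{gdap} to the scalars $\alpha_v := q(u_v)$ to produce a target $\alpha \in F^\times$ whose divisor outside $T$ is a single degree-one prime $v_0$, and then to realize $\alpha$ as $q(u)$ for some $u \in L + u_0$ approximating $u_v$ at $v \in T$.

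First I would replace each $u_v$ ($v \in T$) by a nearby element of $L_v + u_0$ within $\epsilon/2$ such that $\alpha_v = q(u_v) \ne 0$; this perturbation is possible because $L_v + u_0$ is open in $V_v$ (Lemma \ref{opencom}) and the zero locus of $q$ has empty interior. Applying Lemma \ref{gdap} to $(\alpha_v)_{v \in T}$, with signs $s_w$ chosen at each real place so that $s_w \alpha$ agrees with the sign required for $\alpha$ to lie in $q(V_w)$ (in particular, $s_w \alpha > 0$ at places where $V_w$ is positive definite), produces $\alpha \in F^\times$ and a degree-one prime $v_0 \notin T$ with $|\alpha - \alpha_v|_v < \epsilon'$ for $v \in T$, $\alpha \in \frak o_v^\times$ for $v \notin T \cup \{v_0\}$, and $ord_{v_0}(\alpha) = 1$.

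Next I would verify $\alpha \to gen(L + u_0)$. At each $v \in T$ the inverse function theorem (Theorem 3.2 of \cite{PR}) applied to $q$ at the non-zero smooth point $u_v$ gives $u'_v \in L_v + u_0$ with $q(u'_v) = \alpha$ and $|u'_v - u_v|_v < \epsilon$, provided $\epsilon'$ is small enough. For $v \notin T$, since $L_v + u_0 = L_v$ is unimodular and $v$ is non-dyadic, Hensel's lemma together with the hypothesis on $\dim V$ (and the anisotropic-discriminant assumption in the rank-two case) yields $q(L_v) \supseteq \frak o_v \setminus \{0\}$; this gives local representation of $\alpha$ both at $v \notin T \cup \{v_0\}$ (where $\alpha$ is a unit) and at $v = v_0$ (where $ord_{v_0}(\alpha) = 1$). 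Hence $\alpha$ is adelically represented by $L + u_0$.

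The main obstacle is the passage from adelic representability to a global $u \in L + u_0$ with $q(u) = \alpha$ that simultaneously satisfies $|u - u'_v|_v < \epsilon$ at $v \in T$. Since $\dim V \geq 3$ forces $V_v$ to be isotropic at every non-archimedean place, one has strong approximation for $\Spin(V)$ off $\infty_F$; combined with Corollary \ref{spn} (ensuring every spinor genus in $gen(L + u_0)$ represents $\alpha$), this transports the adelic solution $(u'_v)$ to a global $u \in L + u_0$ representing $\alpha$ with the prescribed local approximation at $v \in T$. The rank-two anisotropic case is handled analogously, using the degree-one split prime $v_0$ as the auxiliary place where the norm torus $\Spin(V)$ becomes isotropic. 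The output $u$ then satisfies $|u - u_v|_v < \epsilon$ for $v \in T$, while $q(u) = \alpha$ has the required prime-ideal factorization: a unit at every $v \notin T \cup \{v_0\}$, and $ord_{v_0}(q(u)) = 1$.
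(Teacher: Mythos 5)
Your proposal departs substantially from the paper's argument and contains a gap that I don't think can be repaired along the lines you sketch. The paper's proof constructs $u$ directly: in the rank-two anisotropic case it identifies $(V, a\cdot q)$ with $(E, N_{E/F})$ where $E = F(\sqrt{-\det V})$, applies Lemma~\ref{gdap} \emph{over} $E$ to produce $\lambda \in E^\times$ supported outside the relevant primes except for a single degree-one prime, and sets $u = \phi^{-1}(\lambda)$; the higher-rank case then reduces to this by picking a rank-two unimodular sublattice $N$ with $-\det(N) \notin (F_{v_1}^\times)^2$ at an auxiliary place $v_1$. No spinor-genus or strong-approximation input is needed.

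Your approach instead prescribes the target value $\alpha := q(u)$ via Lemma~\ref{gdap} over $F$ and then tries to realize $\alpha$ globally by appealing to strong approximation for $\Spin(V)$ and Corollary~\ref{spn}. The step "Since $\dim V \geq 3$ forces $V_v$ to be isotropic at every non-archimedean place, one has strong approximation for $\Spin(V)$ off $\infty_F$" is where this breaks down. First, ternary and quaternary forms can be anisotropic at finitely many finite places, so the local isotropy claim is false for $\dim V \in \{3,4\}$. Second, and more seriously, strong approximation for $\Spin(V)$ off $\infty_F$ requires $\Spin(V)(F_{\infty_F})$ to be non-compact, i.e.\ $V$ indefinite at some archimedean place. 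But Lemma~\ref{cs} is used precisely in the definite setting (all of Section 4 concerns definite quadratic polynomials), where $\Spin(V)(F_{\infty_F})$ is compact and this form of strong approximation fails outright. The paper's substitute for strong approximation in the definite case is Proposition~\ref{arithiso}, the Kneser arithmetic-isotropy trick, but that requires $ord_{v_0}(\alpha) \geq h$ for a lattice-dependent bound $h$; your construction forces $ord_{v_0}(\alpha) = 1$, which is exactly the opposite, so Proposition~\ref{arithiso} cannot be invoked either. The rank-two case fares no better: you propose treating $\Spin$ of a binary space (a torus) as if strong approximation applied, but it does not, which is why the paper resorts to the explicit norm-form identification and works over $E$ instead.
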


\begin{proof} Step 1.  If $dim(V)=2$ and $-det(V)\not\in (F^\times)^2$, there is $a\in F^\times$ such that $$ \phi: \ \ (V/F, \ a\cdot q)\cong (E/F,\  N_{E/F})$$ as quadratic spaces, where $E=F(\sqrt{-det(V)})$ and $N_{E/F}$ is the usual norm map. Let
$$ T_1= \{ v\in \Omega_F \setminus (T\cup \infty_F): \ \ \phi(L)_v \neq {o_{E}}_v \} $$
where $o_E$ is the integral closure of $\frak o_F$ inside $E/F$ and $u_v\in L_v$ such that $q(u_v)\in o_v^\times$ for all $v\in T_1$ since $L_v$ is unimodular for $v\not\in T$.

For sufficiently small $\epsilon >0$, one obtains $\lambda \in E$ and a non-archimedean prime $w_0$ of $E$ of degree one such that
$|\lambda-\phi(u_v) |_w < \epsilon $ for all $w$ above $T\cup T_1$ and $\lambda$ is unit at any non-archimedean prime $w$ which is not above $T\cup T_1$ but $ord_{w_0}(\lambda)=1$ by applying Lemma \ref{gdap} for the set of primes of $E$ above $T\cup T_1$.

We claim that $u=\phi^{-1}(\lambda)$ is as required. Indeed, $u\in L_v+u_0$ for all $v\in \Omega_F$ by our construction. This implies that $u\in L+u_0$. For any finite prime $v$ with $v\not\in T\cup T_1$, one has both $q(L_v)$ and $N_{E/F}({\frak o_{E}}_v)$ contain the units of $\frak o_v^\times$. This implies that $ord_v(a)=0$ for $v\not\in T\cup T_1$. Therefore
$$ ord_{v_0}(q(u))=ord_{v_0}(a\cdot q(u))=ord_{v_0}(N_{E/F}(\lambda))=1 $$ where $v_0$ is the prime in $F$ below $w_0$ and
$$ ord_v(q(u))=ord_v(a\cdot q(u))=ord_v(N_{E/F}(\lambda))= 0 $$ for all finite primes $v\not\in (T\cup T_1\cup \{v_0\})$. Since $|u-u_v|_v<\epsilon$ with sufficient small $\epsilon$ for $v\in T_1$, one has $ord_v(q(u))=ord_v(q(u_v))=0$ for $v\in T_1$ by the choice of $u_v$ for $v\in T_1$. The claim follows. \medskip

Step 2.  If $dim(V)\geq 3$, we set $M=L+ \frak o_F u_0$. Choose a finite $v_1\not\in T$. Then $$M_{v_1}=L_{v_1}, \ \ \ \frak o_{v_1}^\times \subset q(M_{v_1}) \ \ \ \text{and} \ \ \ [\frak o_{v_1}^\times: (\frak o_{v_1}^\times)^2]=2. $$  There is a unimodular sublattice $K\subset M_{v_1}$ satisfying $$rank(K)=2 \ \ \ \text{and} \ \ \ -det(K) \not\in (F_{v_1}^\times)^2 . $$ Write $K=\frak o_{v_1}u_{v_1} \perp \frak o_{v_1}\eta_{v_1}$.
For sufficiently small $\epsilon >0$, there is $\xi\in M$ such that $$ |\xi-u_{v_1}|_{v_1}<\epsilon \ \ \ \text{and} \ \ \ |\xi-u_v|_v <\epsilon  $$ for $v\in T$ by Chinese Remainder Theorem. Let $$ T_1= \{ v\in \Omega_F\setminus (T\cup \{v_1\} \cup \infty_F): \ \ ord_v(q(\xi))\neq 0 \} . $$ Choose $\eta_v\in M_v$ such that $ord_v(q(\eta_v))=0$ for each $v\in T_1$. For sufficiently small $\epsilon >0$, there is $\eta\in M$ such that $$ |\eta-\eta_{v_1}|_{v_1}<\epsilon \ \ \ \text{and} \ \ \ |\eta-\eta_v|_v <\epsilon  $$ for $v\in T_1$ and $\xi$ and $\eta$ are linearly independent over $F$ by Chinese Remainder Theorem. Let $$N=\frak o_F \xi+ \frak o_F \eta \ \ \text{and} \ \  T_2=\{ v\in \Omega_F\setminus (T\cup \{v_1\} \cup T_1\cup \infty_F): \ \text{$N_v$ is not unimodular} \}  $$ and $$ \delta_v = \begin{cases} \xi \ \ \ & v\in T\cup T_2 \\
\eta \ \ \ & v\in T_1 .  \end{cases} $$ Since $$-det(N)(F_{v_1}^\times)^2 = -det(K)(F_{v_1}^\times)^2 \neq (F_{v_1}^\times)^2$$ for sufficiently small $\epsilon$ by our choice of $v_1$ and approximation, one can apply Step 1 to $N$ with given vectors $\delta_v$ for $v\in T\cup T_1\cup T_2$. For sufficiently small $\epsilon >0$, there are $u\in N$ such that $|u-\delta_v|_v<\epsilon$ for all $v\in T\cup T_1\cup T_2$ and $ord_v(q(u))=0$ for all primes $v\not\in T\cup T_1\cup T_2$ except one prime $v_0$ and $ord_{v_0}(q(u))=1$. By the approximation on $T$, one concludes that $u\in L_v+u_0$ for all $v\in \Omega_F$. Therefore $u\in L+u_0$. By approximation on $T_1\cup T_2$, one also has that $ord_v(q(u))=0$ for all $v\in T_1\cup T_2$. The proof is complete. \end{proof}

We extend \S 2 in \cite{HJ} over a number field.

\begin{lem}\label{locorth} Suppose $V_v$ is a non-degenerated quadratic space over $F_v$ such that $dim(V_v)\geq 3$ or $dim(V_v)=2$ with $-det(V_v)\not\in (F_v^\times)^2$ for $v<\infty_F$. Let $L_v$ be a lattice in $V_v$ and $s$ be a positive integer. For any $x\in V_v$ with $x \neq 0$,  there is an anisotropic vector $y\in V_v$  such that $x-y$ is anisotropic and $$x-y \in (F_vy)^{\perp}\cap \pi_v^s L_v. $$
\end{lem}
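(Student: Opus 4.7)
My plan is to recast the conclusion as finding an auxiliary vector $z := x - y$ on a particular affine quadric, and then to produce $z$ small enough via Hensel's lemma. Setting $z := x - y$, the orthogonality condition $x - y \in (F_vy)^\perp$ translates to $\langle z, x\rangle = q(z)$; the anisotropy of $x-y$ becomes $q(z)\neq 0$; and the anisotropy of $y = x-z$ becomes $q(x-z) = q(x) - q(z)\neq 0$, using $\langle z, x\rangle = q(z)$. So it suffices to produce
$$z \in \pi_v^s L_v \quad \text{with} \quad \langle z, x\rangle = q(z) \quad \text{and} \quad q(z)\notin \{0,\, q(x)\},$$
and then set $y := x-z$; for $z$ small this $y$ is automatically nonzero.

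Next I would split into two cases. If $q(x)\neq 0$, I would pick an anisotropic $w\in x^\perp$; such $w$ exists because the hypothesis on $V_v$ forces $x^\perp$ to be nondegenerate (and in the borderline case $\dim(V_v) = 2$, the condition $-\det(V_v)\notin (F_v^\times)^2$ forces $V_v$ itself to be anisotropic, so every nonzero vector works). I would then search for $z$ in the plane $F_vw + F_vx$, writing $z = \lambda w + \mu x$. A direct computation using $w\in x^\perp$ reduces the quadric condition $\langle z, x\rangle = q(z)$ to
$$\mu^2 q(x) - \mu q(x) + \lambda^2 q(w) = 0.$$
By Hensel's lemma (equivalently the inverse function theorem of Thm.~3.2 in \cite{PR} already invoked above), for $|\lambda|_v$ sufficiently small this equation has a small solution $\mu = \mu(\lambda)$ with $\mu = \lambda^2 q(w)/q(x) + O(\lambda^4)$. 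Taking $|\lambda|_v$ small enough places $z\in \pi_v^s L_v$, and the expansion $q(z) = \lambda^2 q(w) + O(\lambda^4)$ makes $q(z)$ both nonzero and distinct from $q(x)\neq 0$.

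If instead $q(x) = 0$, then $V_v$ carries an isotropic vector, so the hypothesis forces $\dim(V_v) \geq 3$. I would complete $x$ to a hyperbolic pair $\{x, y_0\}$ with $q(y_0) = 0$ and $\langle x, y_0\rangle = 1$, giving an orthogonal decomposition $V_v = H\perp W$ with $H = F_vx\oplus F_vy_0$ and $W$ nondegenerate of dimension $\geq 1$, hence containing an anisotropic $w$. The explicit ansatz
$$z := \lambda w + \lambda^2 q(w)\, y_0$$
satisfies $\langle z, x\rangle = \lambda^2 q(w) = q(z)$ by a direct computation using $w\perp H$ and $q(y_0) = 0$; shrinking $|\lambda|_v$ places $z$ in $\pi_v^s L_v$, and $q(z) = \lambda^2 q(w)\neq 0 = q(x)$.

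The only delicate point is the existence of the anisotropic vector $w$ used in each case, which is exactly the content of the dimension/determinant hypothesis on $V_v$. Once $w$ is in hand, the rest is routine perturbation theory: one reduces the quadric condition to an explicit one-parameter family in $\lambda$ and shrinks $|\lambda|_v$ to land inside $\pi_v^s L_v$ while keeping the anisotropy conditions open.
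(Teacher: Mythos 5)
Your proof is correct and takes essentially the same route as the paper: split on whether $x$ is anisotropic or isotropic, and in each case construct $z=x-y$ explicitly in the two-dimensional space spanned by $x$ (resp.\ its hyperbolic partner) and an auxiliary anisotropic vector orthogonal to it. The only notable difference is that in the anisotropic case you produce $\mu$ via Hensel's lemma, whereas the paper writes the solution of the same quadratic in closed form via the normalizing factor $(1+q(\xi)q(x)^{-1}\pi_v^{2t})^{-1}$; your isotropic-case ansatz coincides with the paper's up to relabeling and sign.
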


\begin{proof} If $x$ is anisotropic, one can choose an anisotropic vector $\xi$ such that $$V_v=F_vx\perp F_v \xi\perp W$$ for some regular subspace $W$. Let $t$ be a sufficiently large positive integer such that $$\pi_v^t \xi\in \pi_v^s L_v, \ \  \ q(\xi)q(x)^{-1} \pi_v^{2t} x \in \pi_v^s L_v  \ \ \ \text{ and } \ \ \ q(x)+\pi_v^{2t} q(\xi) \neq 0 . $$
Let $$ y=(1+q(\xi)q(x)^{-1}\pi_v^{2t})^{-1} (x+\pi_v^t \xi)  $$ and $y$ is anisotropic by the choice of $t$.  Then
$$ x-y = (1+q(\xi)q(x)^{-1}\pi_v^{2t})^{-1}( q(\xi)q(x)^{-1}\pi_v^{2t} x - \pi_v^t \xi) \in \pi_v^s L_v$$ and
$ \langle x-y, y \rangle = 0$. Moreover,
$$ q(x-y)= (1+q(\xi)q(x)^{-1}\pi_v^{2t})^{-1} \cdot \pi_v^{2t} \cdot q(\xi) \neq 0 $$ as required.

If $x$ is isotropic, there is $w\in V_v$ such that $\langle x, w \rangle=1$ and $q(w)=0$. By the assumption on $V_v$, there are an anisotropic vector $h\in V_v$ and a regular subspace $W$ of $V_v$ such that $$V_v=(F_v x+F_v w)\perp F_v h \perp W . $$ Let $t$ be a sufficiently large positive integer such that $$\pi_v^t h \in \pi_v^s L_v, \ \ \ q(h) \pi_v^{2t} w \in \pi_v^s L_v  \ \ \ \text{ and } \ \  \   y = x-\pi_v^{2t} q(h) w+ \pi_v^t h . $$
Then $y$ is anisotropic and $\langle x-y, y \rangle = 0 $ and $x-y\in \pi_v^s L_v$ as required.
\end{proof}

Modifying the above result, one can extend the statement to $x=0$.

\begin{cor}\label{loclatt}  Suppose $V_v$ is a non-degenerated quadratic space over $F_v$ such that $dim(V_v)\geq 3$ or $dim(V_v)=2$ with $-det(V_v)\not\in (F_v^\times)^2$ for $v<\infty_F$. Let $L_v$ be a lattice in $V_v$ and $s$ be a positive integer.
 If $x\in V_v$ with $x \neq 0$,  there is an anisotropic vector $y\in V_v$ and a sublattice $K_v$ in $(F_vy)^{\perp}\cap \pi_v^s L_v$ such that $x-y \in \pi_v^s L_v $ and $0\neq q(x)-q(y)$ is represented by $K_v$ primitively. Moreover, if $V_v$ is isotropic, this is also true for $x=0$.
\end{cor}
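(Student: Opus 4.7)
The plan is to split into the two cases allowed by the statement and to reduce each to constructions we already control. For $x \neq 0$, Lemma \ref{locorth} already produces an anisotropic vector $y$ with $x - y$ anisotropic and lying in $(F_v y)^{\perp} \cap \pi_v^s L_v$; extracting the sublattice $K_v$ will then be immediate. For $x = 0$ with $V_v$ isotropic, Lemma \ref{locorth} is silent (its ``isotropic $x$'' branch uses a dual vector $w$ with $\langle x, w\rangle = 1$, which is vacuous at $x = 0$), so one must build $y$ and $K_v$ by hand from a hyperbolic plane inside $V_v$.

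For the $x \neq 0$ case, let $y$ be the vector produced by Lemma \ref{locorth} and take $K_v = \mathfrak{o}_v \cdot (x - y)$, a rank-one sublattice sitting inside $(F_v y)^{\perp} \cap \pi_v^s L_v$ because $x - y$ lies there. The generator $x - y$ is primitive in $K_v$ by construction, and since $\langle x - y, y\rangle = 0$ one has $q(x) = q(x - y) + q(y)$, so that $q(x) - q(y) = q(x - y) \neq 0$ is primitively represented by $K_v$.

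For the $x = 0$ case, use that a non-degenerate isotropic quadratic space over $F_v$ contains a hyperbolic plane $F_v w + F_v w' \subseteq V_v$ with $q(w) = q(w') = 0$ and $\langle w, w'\rangle = 1$. Choose $a, b \in F_v^\times$ whose $v$-valuations are large enough that $aw,\, bw' \in \pi_v^s L_v$, and set $y = aw + bw'$ together with $z = aw - bw'$. Then $q(y) = 2ab \neq 0$ makes $y$ anisotropic, $x - y = -y$ lies in $\pi_v^s L_v$, and a direct bilinear computation gives $\langle y, z\rangle = 0$ as well as $q(z) = -2ab = -q(y) = q(x) - q(y)$. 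Taking $K_v = \mathfrak{o}_v \cdot z$ finishes the construction, with $z$ primitive in $K_v$ and $K_v \subseteq (F_v y)^{\perp} \cap \pi_v^s L_v$ by construction.

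The only real subtlety is the $x = 0$ case, since Lemma \ref{locorth} does not cover it; the hyperbolic plane lets us produce an orthogonal pair $(y, z)$ with opposite quadratic values \emph{simultaneously} inside $\pi_v^s L_v$, and this is precisely where the hypothesis that $V_v$ be isotropic is used. The construction would collapse in the anisotropic two-dimensional case, which is exactly why the extension to $x = 0$ is stated only under the isotropy assumption.
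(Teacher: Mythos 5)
Your proof is logically correct and does establish the statement as written, but it diverges from the paper's in two ways worth noting. For $x \neq 0$, you take $K_v = \mathfrak{o}_v(x-y)$, a rank-one module, whereas the paper chooses $K_v$ to be a sublattice of $(F_vy)^{\perp}\cap \pi_v^s L_v$ of \emph{maximal rank} subject to $x-y \in^* K_v$, hence of rank $\dim V_v - 1$. The literal wording of the corollary does not force a rank, so both choices satisfy it; however, the maximal-rank choice is what the rest of the section relies on. Definition \ref{ass} turns this $K_v$ into the ``associated lattice,'' Corollary \ref{finilatt} carries it forward, and in the proof of Theorem \ref{defrep} these local pieces are glued into a global lattice $K_\lambda$ with $(K_\lambda)_v = K_v$ for $v\in T$ and $(K_\lambda)_v = L_v\cap(F_v u_\lambda)^\perp$ otherwise; for this to define a lattice of rank $\dim V - 1 = 3$ to which Proposition \ref{arithiso} (which needs rank at least $3$) can be applied, the local $K_v$ must already have rank $\dim V_v - 1$. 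So your construction proves the corollary but would not feed into the later argument. For $x = 0$, your explicit hyperbolic-plane construction is fine, but the paper's route is shorter: it picks any nonzero isotropic $x'\in \pi_v^s L_v$, applies the $x\neq 0$ case to $x'$, and observes that since $q(x')=q(0)=0$ and $x'\in\pi_v^s L_v$, the same $y$ and $K_v$ serve for $x=0$. Your remark that ``one must build $y$ and $K_v$ by hand'' is therefore a little too strong; the bootstrap is available, though your direct argument is perfectly valid and perhaps more transparent about where isotropy is used.
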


\begin{proof} By lemma \ref{locorth}, one only needs to choose a sublattice $K$ of $(F_vy)^{\perp}\cap \pi_v^s L_v$ with the maximal rank satisfying $x-y \in^* K$. In order to extend this statement to $x=0$ for isotropic $V_v$, one can choose an isotropic vector $x'\in \pi_v^s L_v$ with $x'\neq 0$ and apply the previous result for $x'$. The result follows from $q(x')=q(0)=0$.
\end{proof}

\begin{definition} \label{ass} Suppose $V_v$ is a non-degenerated quadratic space over $F_v$ for $v<\infty_F$. Let $L_v$ be a lattice in $V_v$ and $s$ be a positive integer. For any $x\in V_v$, we call the anisotropic vector $y$ and the sublattice $K_v$ in $(F_vy)^{\perp}\cap \pi_v^s L_v$ satisfying the property of Corollary \ref{loclatt} the associated vector and the associated lattice of $x$ with respect to $L_v$ and $s$ respectively.
\end{definition}

It should be pointed out that Corollary \ref{loclatt} can not extended to $x=0$ if $V_v$ is anisotropic since $-q(y)$ can not be represented by the space $(F_vy)^{\perp}$ by the anisotropic assumption.

\begin{lem}\label{witt} Suppose $V_v$ is a non-degenerated quadratic space over $F_v$ for $v<\infty_F$. Let $L_v$ be a lattice in $V_v$ and $x\in V_v$. If $y$ is sufficiently close to $x$, there is $\sigma\in O(L_v)$ such that $F_v\sigma x =F_v y$. \end{lem}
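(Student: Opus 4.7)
The plan is to exhibit, for $y$ sufficiently close to $x$, an explicit $\sigma \in O(V_v)$ realized as a product of two reflections sending $x$ to a scalar multiple of $y$, and to verify that $\sigma \to \id$ in the $v$-adic topology on $O(V_v)$ as $y \to x$; since $O(L_v)$ is an open subgroup of $O(V_v)$, this will force $\sigma \in O(L_v)$ for $y$ close enough. The case $x=0$ is degenerate, and the isotropic case $q(x)=0$ requires ``sufficiently close'' to be interpreted within the isotropic cone through $x$ (which is the relevant restriction in view of Corollary \ref{loclatt}), so I concentrate on the principal case $q(x) \neq 0$.

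For $y$ close to $x$, continuity of $q$ together with the fact that $(F_v^\times)^2$ is open in $F_v^\times$ near $1$ lets me choose $\lambda \in F_v^\times$ close to $1$ with $\lambda^2 = q(y)/q(x)$. Setting $y' := \lambda^{-1} y$ gives $q(y') = q(x)$ with $y'$ close to $x$ and $F_v y' = F_v y$, so it suffices to produce $\sigma \in O(L_v)$ with $\sigma(x) = y'$. I would then define
$$ \sigma := \tau_{y'} \circ \tau_{x+y'}, \qquad \tau_v(u) := u - \frac{2\langle u, v\rangle}{q(v)}\, v. $$
Both reflections are well defined because $q(y') = q(x) \neq 0$ and $q(x+y') = 2(q(x)+\langle x,y'\rangle) \to 4 q(x) \neq 0$ as $y' \to x$, and a direct computation using $q(y')=q(x)$ yields $\tau_{x+y'}(x) = -y'$ and $\tau_{y'}(-y') = y'$, so $\sigma(x) = y'$.

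To finish, I would verify that $\sigma$ tends to the identity as $y \to x$. The map $v \mapsto \tau_v$ is $v$-adic continuous on the open set of anisotropic vectors (immediate from the explicit formula, once $q(v)$ stays bounded away from $0$), so $\tau_{y'} \to \tau_x$ and $\tau_{x+y'} \to \tau_{2x} = \tau_x$ (recall that $\tau_v$ depends only on the line $F_v v$, not on the chosen representative), whence $\sigma \to \tau_x \circ \tau_x = \id$. Because $O(L_v)$ is an open subgroup of $O(V_v)$, this gives $\sigma \in O(L_v)$ for $y$ sufficiently close to $x$, and then $F_v \sigma x = F_v y' = F_v y$ as required. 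The main delicate point is precisely this continuity statement for $v \mapsto \tau_v$ near an anisotropic $x$; everything else is a bookkeeping computation.
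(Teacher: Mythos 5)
Your proof is correct for the anisotropic case $q(x)\neq 0$, which (as you correctly observe) is the only case the lemma is actually used for, since the vector $\beta_x$ produced by Corollary~\ref{loclatt} is always anisotropic. The route is genuinely different from the paper's. The paper first scales so that $q(L_v)\subseteq\frak o_v$, picks $j$ so that $\pi_v^j x$ is primitive in the dual lattice $L_v^\sharp$, imposes explicit congruence conditions on $y$ (namely $\pi_v^j y$ primitive in $L_v^\sharp$, $q(y)=\epsilon^2 q(x)$ with $\epsilon\in\frak o_v^\times$, and $\pi_v^j(\epsilon^{-1}y-x)\in L_v$), invokes Witt's theorem to get $\tau\in O(V_v)$ with $\tau(x)=\epsilon^{-1}y$, and then quotes Lemma~2.1 of \cite{SX} to upgrade $\tau$ to some $\sigma\in O(L_v)$ that still maps the line $F_v x$ to $F_v y$. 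You instead build the isometry explicitly as a product of two reflections $\tau_{y'}\circ\tau_{x+y'}$ (after normalizing $y$ to $y'$ with $q(y')=q(x)$, which uses the openness of $(F_v^\times)^2$), check directly that $\sigma(x)=y'$, and then finish with a purely topological argument: $\sigma\to\id$ as $y\to x$ because $\tau_{v}$ varies continuously in $v$ and $\tau_{y'},\tau_{x+y'}\to\tau_x$, and $O(L_v)$ is an open subgroup of $O(V_v)$, so $\sigma$ eventually lands in it. Your approach is self-contained and avoids the external citation; the paper's approach, by pinning down explicit primitivity and congruence conditions and appealing to the quantitative lattice-theoretic lemma in \cite{SX}, has the side benefit of making the threshold for ``sufficiently close'' explicit, whereas in your argument it remains implicit in the openness. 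Both are valid, and both implicitly tighten the hypothesis to anisotropic $x$ (the statement as printed is not literally true for isotropic or zero $x$, since then every $\sigma x$ stays isotropic while nearby $y$ need not be).
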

\begin{proof} Without loss of generality, one can assume $q(L_v)\subseteq \frak o_v$ by scaling. Let $j\in \Bbb Z$ such that $\pi_v^j x$ is primitive in $L_v^\sharp$. If $y$ is so close to $x$ such that $\pi_v^j y$ is also primitive in $L_v^\sharp$ and $q(y)=\epsilon^2 q(x)$ with $\epsilon\in \frak o_v^\times$ and $\pi_v^j(\epsilon^{-1}y-x)\in L_v$, then there is $\tau\in O(V_v)$ such that $\tau(x)=\epsilon^{-1}y$ by Witt theorem. Applying Lemma 2.1 in \cite{SX}, one obtains $\sigma\in O(L_v)$ such that $\frak o_v \sigma x = \frak o_v y$ as required. \end{proof}

One needs to modify the local cover in \cite{HJ} to the following corollary.

\begin{cor}\label{finilatt}  Suppose $V_v$ is a non-degenerated quadratic space over $F_v$ such that $dim(V_v)\geq 3$ or $dim(V_v)=2$ with $-det(V_v)\not\in (F_v^\times)^2$ for $v<\infty_F$. Let $L_v$ be a lattice in $V_v$ and $s$ be a positive integer.

 If $P$ is an open compact subset of $V_v$ such that $0\not\in P$ if $V_v$ is anisotropic, there are $\delta>0$ and a finite subset $B\subset P$ and a finite set $\frak B$ of the associated vectors of $B$ with respect to $L_v$ and $s$ satisfying the following property:

 For any $x\in P$,  there is $b\in B$ with the associated vector $\beta_b\in \frak B$ such that $x-b\in \pi_v^s L_v$ and for any $|\xi-\beta_b|_v<\delta$ one has  $\xi- x \in \pi_v^s L_v$ and there is a sublattice $K_v$ of $\pi_v^s L_v\cap (F_v \xi)^\perp$ satisfying that whenever $|u-\beta_b|_v<\delta$ for $u\in V_v$ one has $q(x)-q(u)$ is represented by $K_v$ primitively.
\end{cor}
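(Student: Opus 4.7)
The plan is to use compactness of $P$ to reduce to a finite cover by cosets of a sufficiently deep sublattice, apply Corollary \ref{loclatt} on each representative, and transport the resulting associated lattice to any nearby direction $F_v\xi$ using Lemma \ref{witt}.

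First, I would fix an auxiliary integer $s' \geq s$ (to be enlarged below) and, by compactness of $P$, choose a finite set $B \subset P$ with $P \subseteq \bigcup_{b \in B}(b + \pi_v^{s'} L_v)$. Since $0 \notin P$ in the anisotropic case, each $b \in B$ is nonzero, so Corollary \ref{loclatt} (applied with $s'$ in place of $s$) yields an anisotropic vector $\beta_b$ and a sublattice $K_b \subseteq \pi_v^{s'} L_v \cap (F_v \beta_b)^\perp$ such that $b - \beta_b \in \pi_v^{s'} L_v$ and $c_b := q(b) - q(\beta_b) \neq 0$ is primitively represented in $K_b$ by some $w_b$. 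Put $\mathfrak{B} := \{\beta_b : b \in B\}$; since $\pi_v^{s'} L_v \subseteq \pi_v^s L_v$, the pair $(\beta_b, K_b)$ is still an associated vector and lattice for $b$ with respect to $L_v$ and $s$.

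The key observation is openness of primitive representation near $c_b$: because $w_b \in K_b \setminus \pi_v K_b$ with $q(w_b) = c_b \neq 0$, the quadratic map $q|_{F_v K_b}$ is a submersion at $w_b$ (its differential $2\langle w_b,\cdot\rangle$ is nontrivial), hence open there, and primitivity is itself an open condition in $K_b$. So there exists $\epsilon_b > 0$ such that every $c \in F_v$ with $|c - c_b|_v < \epsilon_b$ is primitively represented by $K_b$. Because $B$ is finite, I can then enlarge $s'$ and choose $\delta > 0$ small enough, uniformly in $b \in B$, so that: (a) $\{z \in V_v : |z|_v < \delta\} \subseteq \pi_v^s L_v$; (b) Lemma \ref{witt} applies at each $\beta_b$, producing $\sigma \in O(L_v)$ with $F_v\sigma\beta_b = F_v\xi$ whenever $|\xi - \beta_b|_v < \delta$; and (c) for all $l \in \pi_v^{s'} L_v$ and $|\eta|_v < \delta$, the perturbation $2\langle b,l\rangle + q(l) - 2\langle \beta_b,\eta\rangle - q(\eta)$ has $v$-adic absolute value below $\epsilon := \min_{b \in B}\epsilon_b$.

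With these data in hand, the conclusion is immediate. Given $x \in P$, pick $b \in B$ with $x - b \in \pi_v^{s'} L_v \subseteq \pi_v^s L_v$. For any $\xi$ with $|\xi - \beta_b|_v < \delta$, decomposing $\xi - x = (\xi - \beta_b) + (\beta_b - b) - (x - b)$ and invoking (a) gives $\xi - x \in \pi_v^s L_v$. By (b), choose $\sigma \in O(L_v)$ with $F_v\sigma\beta_b = F_v\xi$ and set $K_v := \sigma K_b$; since $\sigma$ preserves $\pi_v^{s'} L_v$ and carries $(F_v\beta_b)^\perp$ onto $(F_v\xi)^\perp$, we have $K_v \subseteq \pi_v^s L_v \cap (F_v\xi)^\perp$. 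Finally, for any $u$ with $|u - \beta_b|_v < \delta$, condition (c) gives $|q(x) - q(u) - c_b|_v < \epsilon \leq \epsilon_b$, so $q(x) - q(u)$ is primitively represented by $K_b$, hence by $K_v = \sigma K_b$, since $\sigma$ preserves primitivity. The main obstacle is really just step (c): jointly choosing $s'$ and $\delta$ so that the error $q(x) - q(u) - c_b$ falls inside every open neighborhood produced by the openness argument. The finiteness of $B$ makes this straightforward, since both the bound on the error and the radius for Lemma \ref{witt} are continuous and admit uniform choices over $b \in B$.
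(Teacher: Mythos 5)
Your overall strategy — compactness to get a finite cover, apply Corollary~\ref{loclatt} to the representatives, and transport the associated lattice along a nearby direction via Lemma~\ref{witt} — is the same as the paper's. However, the way you organize the quantifiers introduces a genuine circularity that the paper's proof carefully avoids.

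You first fix $s'$, extract a finite cover $B$ of $P$ by translates of $\pi_v^{s'}L_v$, and derive from each $b\in B$ the data $\beta_b$, $K_b$, $c_b$ and the "openness radius" $\epsilon_b$; you then set $\epsilon=\min_b\epsilon_b$ and announce that you will "enlarge $s'$" so that (c) holds, i.e. so that $\lvert 2\langle b,l\rangle+q(l)\rvert_v<\epsilon$ for all $l\in\pi_v^{s'}L_v$. But once you enlarge $s'$, the original cover $P\subseteq\bigcup_{b\in B}(b+\pi_v^{s'}L_v)$ no longer holds (the translates shrink), so you must re-choose $B$, which changes the collection $\{\epsilon_b\}$ and hence $\epsilon$, which may force $s'$ larger still. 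There is no a priori reason this iteration stabilizes, since the assignment $b\mapsto\epsilon_b$ comes from a non-canonical choice in Corollary~\ref{loclatt} and is not continuous, let alone bounded below uniformly in $b$ and $s'$. So as written, step (c) cannot be achieved simultaneously with the cover from which $\epsilon$ was computed.

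The paper sidesteps this by reversing the order: for \emph{each} $x\in P$ it first fixes $\beta_x$, $K_v(\beta_x)$ and a pointwise radius $\delta(x,\beta_x)$ small enough that (1) nearby points stay in $P$ and differ from $x$ by an element of $\pi_v^sL_v$, (2) for $\xi$ near $x$ and $\eta$ near $\beta_x$, $q(\xi)-q(\eta)$ equals $q(x)-q(\beta_x)$ \emph{up to a unit square}, and (3) Lemma~\ref{witt} applies at $\beta_x$. Only \emph{then} does it invoke compactness to extract a finite cover $\{U(b,\delta(b,\beta_b))\}_{b\in B}$ and set $\delta=\min_b\delta(b,\beta_b)$. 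Because each radius is fixed before the cover is chosen, there is no back-and-forth. Note also that condition (2), the "unit-square" normalization, is strictly more efficient than your openness-of-representation argument: if $q(x)-q(u)=\mu^2(q(b)-q(\beta_b))$ with $\mu\in\frak o_v^\times$, then scaling the primitive representing vector by $\mu$ gives a primitive representation of $q(x)-q(u)$ immediately, with no need for a submersion/implicit-function argument or an explicit $\epsilon_b$. Your argument can be repaired by adopting exactly this reordering: choose for each $x\in P$ a radius $r_x$ small enough that $\lvert q(\xi)-q(\eta)-c_x\rvert_v<\epsilon_x$ (or, better, that $(q(\xi)-q(\eta))/c_x\in(\frak o_v^\times)^2$) whenever $\lvert\xi-x\rvert_v,\lvert\eta-\beta_x\rvert_v<r_x$, and then take a finite subcover. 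As it stands, the "enlarge $s'$ after fixing $B$" step is a gap.
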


\begin{proof} Let $\delta_0>0$ such that for any $|x-y|_v < \delta_0$ with $x, y\in V_v$ one has $x-y \in \pi_v^s L_v$.

For any $x\in P$, one fixes the associated vector $\beta_x$ and the associated lattice  $K_v(\beta_x)$ of $x$ with respect to $L_v$ and $s$ by Corollary \ref{loclatt}.  Choose $0<\delta(x,\beta_x)< \delta_0$ so small such that

1)  $$U(x, \delta(x,\beta_x))=\{ z\in V_v: \ |z-x|_v<\delta(x,\beta_x)\} \subseteq P $$ and  $z-x\in \pi_v^s L_v$  for $z\in U(x, \delta(x,\beta_x))$.

2) if $$|\xi-x|_v<\delta(x,\beta_x) \ \ \ \text{ and } \ \ \ |\eta-\beta_x|_v<\delta(x,\beta_x) $$ for $\xi, \eta\in V_v$, one has $q(\xi)-q(\eta)$ is equal to $q(x)-q(\beta_x)$ up to a square of unit in $\frak o_{v}$.

 3) if $|z-\beta_x|_v <\delta(x,\beta_x)$, there is $\sigma\in O(L_v)$ such that $F_v \sigma \beta_x=F_v z$ by Lemma \ref{witt}.

 By compactness of $P$,  there is a finite subset $B$ of $P$ such that $\{ U(b, \delta(b,\beta_b) \}_{b\in B}$ is a cover of $P$.  Define $$\delta= \min_{b\in B} \{ \delta(b, \beta_b)\} . $$

For any $x\in P$, there is $b\in B$ such that $x\in U(b, \delta(b, \beta_b))$. If $|\xi-\beta_b|<\delta\leq \delta_0$, one has $$\xi-\beta_b\in \pi_v^s L_v \ \ \ \text{ and } \ \ \ x-b\in \pi_v^s L_v $$ by the choice of $\delta_0$. By corollary \ref{loclatt}, one has $b-\beta_b\in \pi_v^s L_v$. Therefore $\xi- x \in \pi_v^s L_v$.

By corollary \ref{loclatt}, there is a sub-lattice $K(\beta_b)$ of maximal rank in $(F_v \beta_b)^\perp \cap \pi_v^s L_v$ such that $q(b)-q(\beta_b)$ is represented by $K(\beta_b)$ primitively. By 3), there is $\sigma\in O(L_v)$ such that $F_v\sigma \beta_b = F_v \xi$. Set $$K_v= \sigma K(\beta_b) \subseteq \pi_v^s L_v\cap (F_v \xi)^\perp . $$
For any $|u-\beta_b|<\delta$ with $u\in V_v$, one has $q(x)-q(u)$ is equal to $q(b)-q(\beta_b)$ up to a square of unit in $\frak o_{v}$ by 2). Therefore $q(x)-q(u)$ is represented by $K_v=\sigma K(\beta_b)$ primitively.
\end{proof}

When $V_v$ is anisotropic, one can only apply Corollary \ref{loclatt} to $L_v\setminus \{0\}$ which is not compact. This fact produces the counter-example (9.1) in \cite{Cas} where the local-global principle fails.  In order to restrict to some compact subset of $L_v\setminus \{0\}$, one can consider, for example, the set of the primitive vectors of $L_v$.

\begin{thm} \label{defrep} Let $V$ be a non-degenerated quadratic space over $F$ and $w\in \infty_F$ such that $V_w$ is positive definite and $T$ be a finite set of non-archimedean primes containing all dyadic primes. If $dim(V)=4$, then Statement (LT) with respect to $T$ holds.
\end{thm}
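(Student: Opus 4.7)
The plan is to reduce the rank-$4$ representation problem to a rank-$3$ one by slicing off a suitable anisotropic vector $y$ from a would-be solution $x$, and then invoke the rank-$3$ machinery of Section \ref{qde}: Corollary \ref{spn} to pass from the genus to the spinor genus, and Proposition \ref{arithiso} to pass from the spinor genus to the class. This is the lattice-translation, number-field analogue of the arithmetic argument in \S 2 of \cite{HJ}, with Corollary \ref{finilatt} playing the role of the local cover and Lemma \ref{cs} supplying the global anisotropic approximation of $y$ together with control of $ord_{v_0}(q(y))$ at an auxiliary prime.

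Fix a sufficiently large integer $s$. Using hypothesis $1)$ together with hypothesis $2)$ at the anisotropic $v\in T$, pick for each $v\in T$ a vector $x_v\in L_v+u_0$ with $q(x_v)=\alpha$. Apply Corollary \ref{finilatt} to the open compact set $q^{-1}(\alpha)\cap (L_v+u_0)$: there are finitely many possibilities for the associated anisotropic vector $\beta_v$ and its associated sublattice $K_v\subset \pi_v^s L_v\cap (F_v\beta_v)^\perp$ such that $\alpha-q(\beta_v)$ is primitively represented by $K_v$. Choose via the Chebotarev density theorem a non-dyadic finite prime $v_0\notin T$ so that $V_{v_0}$ is isotropic and $L_{v_0}$ is unimodular, and apply Lemma \ref{cs} to produce $y\in V$ close to every $\beta_v$ at $v\in T$, primitive in $L$ outside $T$, with $q(y)$ a unit outside $T\cup\{v_0\}$ and $ord_{v_0}(q(y))=1$; arrange, using the Dirichlet-type approximation of Lemma \ref{gdap}, the additional Hilbert-symbol condition $-(\alpha-q(y))\cdot det(N)\notin (F_{v_0}^\times)^2$ at $v_0$, where $N=(Fy)^\perp \cap L$ is the resulting rank-$3$ lattice.

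Define $u_0'\in V$ via the Chinese Remainder Theorem so that the translate $N+u_0'$ matches $K_v+(x_v-\beta_v)$ locally at each $v\in T$ and lies in $N_v$ for $v\notin T$. By construction $\alpha-q(y)\to gen(N+u_0')$: at $v\in T$ the local representation comes from the primitive representation by $K_v$, and at $v\notin T$ it follows from the unimodularity of $N_v$ via the argument of Lemma 4.2 of \cite{CTX1} used in the proof of Theorem \ref{mainstr}. Corollary \ref{spn} ii) applied to the rank-$3$ lattice $N$ at $v_0$ upgrades this to $\alpha-q(y)\xrightarrow{*}_T spn(N+u_0')$. Enlarging $ord_{v_0}(q(y))$ (again by Lemma \ref{cs}) so that $ord_{v_0}(\alpha-q(y))\geq h(N,u_0',v_0)$ from Proposition \ref{arithiso}, we conclude $\alpha-q(y)\xrightarrow{*}_T cls(N+u_0')$, giving $z\in N+u_0'$ primitive in $N_v$ for $v\notin T$ with $q(z)=\alpha-q(y)$. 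Then $x=y+z$ satisfies $q(x)=\alpha$, lies in $L+u_0$ by the choice of $u_0'$, and is primitive in $L_v$ for $v\notin T$.

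The main obstacle is the simultaneous management of three things: (i) the translation $u_0'\in V$ must make $N+u_0'$ faithfully inherit the local data of $L+u_0$ on orthogonal slices; (ii) the determinant and Hilbert-symbol conditions at $v_0$ required by Corollary \ref{spn} ii) and Lemma \ref{modify} must be secured alongside the approximation at $T$; and (iii) the constant $c=c(L,T,u_0)$ must be uniform in $\alpha$. Point (iii) is the crucial subtlety: Corollary \ref{finilatt} only provides \emph{finitely} many possibilities for $(\beta_v,K_v)$, so $c$ can be taken as the maximum, over this finite set, of the rank-$3$ constants $h(N,u_0',v_0)$ from Proposition \ref{arithiso}, enlarged enough to guarantee $\alpha-q(y)>0$ at $w$; this is where hypothesis $3)$ and the positive-definiteness of $V_w$ enter essentially.
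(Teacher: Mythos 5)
The overall architecture you sketch — split $\alpha = q(y) + (\alpha - q(y))$ with $y$ produced globally by Lemma~\ref{cs}, control the local data via Corollary~\ref{finilatt}, then run the rank-$3$ machinery of Corollary~\ref{spn} and Proposition~\ref{arithiso} on the orthogonal complement — is indeed the architecture of the paper's proof. However, the mechanism you propose for securing the hypothesis $ord_{v_0}(\alpha - q(y)) \geq h$ of Proposition~\ref{arithiso} does not work, and this is exactly the delicate point. You write that you will ``enlarge $ord_{v_0}(q(y))$ (again by Lemma~\ref{cs}) so that $ord_{v_0}(\alpha-q(y))\geq h$,'' but $\alpha$ is given in advance; once $y$ is fixed, $ord_{v_0}(\alpha - q(y))$ is a fixed number, and increasing $ord_{v_0}(q(y))$ while $ord_{v_0}(\alpha)$ stays small only makes $ord_{v_0}(\alpha-q(y))=\min\{ord_{v_0}(\alpha), ord_{v_0}(q(y))\}$ small again. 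Moreover, Lemma~\ref{cs} gives $ord_{v_0}(q(y))=1$ at a prime $v_0$ \emph{that the lemma produces}, not one you pre-specify; and at that prime the rank-$3$ complement $N_{v_0}=(F_{v_0}y)^{\perp}\cap L_{v_0}$ is typically no longer unimodular, so the non-dyadic hypothesis of Proposition~\ref{arithiso} fails there as well. You are conflating two auxiliary primes that must be kept separate.

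The paper resolves both problems by introducing a scalar and then a Cassels-type reduction. It fixes $v_0\notin T$ in advance with $L_{v_0}$ unimodular of square discriminant, enlarges the approximation set to $S=T\cup\{v_0\}$ so that Lemma~\ref{cs} keeps $q(u_\lambda)$ a unit at $v_0$ (while producing a \emph{different} prime $v_\lambda$ where $ord_{v_\lambda}(q(u_\lambda))=1$). The crucial step you are missing is that $u_\lambda$ is then replaced by $\xi u_\lambda$ with $\xi\in\frak o_F$ chosen via CRT as in~(\ref{xi}): one picks $\lambda_{v_0}$ from the two-element set $\frak B_{v_0}$ so that $\alpha/q(u_\lambda)$ is a unit square $\eta_{v_0}^2$ at $v_0$, and then imposes $\xi\equiv\eta_{v_0}\bmod\pi_{v_0}^{h+3}$ with $ord_{v_0}(\xi-a_{v_0})$ of the right parity. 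This makes $ord_{v_0}(\alpha-q(\xi u_\lambda))$ simultaneously $>h$ and odd, which hands Proposition~\ref{arithiso} its order hypothesis and Corollary~\ref{spn}~ii) its non-square discriminant condition for free. This device requires $ord_{v_0}(\alpha)=0$, and the general case is reduced to it by the Cassels' trick: rescale $V$ by $\pi_{v_0}^{-k}$ with $k=ord_{v_0}(\alpha)$ and use that only finitely many isometry classes of lattices with given scale and volume occur, so the constant $c$ stays uniform. Without both the $\xi$-scaling and the Cassels reduction, your proof does not close; your handling of uniformity (maximizing over the finite set from Corollary~\ref{finilatt}) is on the right track but addresses only part of the problem.
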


\begin{proof}  By Theorem 8.3 in \cite{CTX}, one only needs to consider that $V_v$ is definite for all $v\in \infty_F$.
We first prove the following claim.
\begin{claim} \label{claim} For any non-archimedean prime $v_0\not\in T$, there is a positive number $$C_1=C_1(L,T,u_0,v_0)$$ such that whenever $\alpha\in F$ satisfies $\alpha > C_1$ at $w$ and $ord_{v_0}(\alpha)=0$ and $\alpha \rightarrow gen(L+u_0)$ with
$ \alpha \xrightarrow{*} L_v $ for those $v\in T$ where $V_v$ is anisotropic and $u_0\in L_v$, then there is $x\in L+u_0$ with $x\in^* L_v$ for all $v\not\in T$ such that $q(x)=\alpha$.
\end{claim}

Indeed, we put
$$ P_v= \begin{cases}  \{ x\in^* L_v \} \ \ \ & \text{$V_v$ is anisotropic and $u_0\in L_v$} \\
L_v+u_0 \ \ \ & \text{otherwise} \end{cases} $$
for all $v\in T$ and fix an positive integer $s$. Then there are $\delta_v>0$ and a finite subset $B_v\subset P_v$ and a finite set $\frak B_v$ of the associated vectors of $B_v$ with respect to $L_v$ and $s$ satisfying the property of Corollary \ref{finilatt}. By the choice of $P_v$ and Corollary \ref{loclatt}, one has $\frak B_v \subset P_v$ for $v\in T$. Let $\frak B_{v_0}=\{x_{v_0}, y_{v_0}\} \subset L_{v_0}$ such that $q(x_{v_0})= 1$ and $q(y_{v_0})$ is a non-square unit in $\frak o_{v_0}$. Choose $\delta_{v_0}>0$ so small such that if $|x-\lambda_{v_0}|_{v_0}<\delta_{v_0}$ for $\lambda_{v_0}\in \frak B_{v_0}$ then $$q(\lambda_{v_0})q(x)^{-1} \in (\frak o_{v_0}^\times)^2 . $$ Set
$$S=T\cup \{v_0\} \ \ \ \text{and} \ \ \ \Lambda =\prod_{v\in S} \frak B_v . $$
For any $\lambda=(\lambda_v)_{v\in S}\in \Lambda$, there is $u_\lambda\in L+u_0$ and one prime $v_\lambda \not\in S$ such that $|u_\lambda -\lambda_v|_v<\delta_v$ for all $v\in S$ and $$ord_v(q(u_\lambda))= \begin{cases} 0 \ \ \ & v\not\in S\cup\{v_\lambda\} \\ 1 \ \ \ & v=v_\lambda \end{cases} $$ by Lemma \ref{cs}. There is a sublattice $K_v$ of $(F_v u_\lambda)^\perp $ satisfying the property of Corollary \ref{finilatt} for $v\in T$. Construct the sublattice $K_\lambda$ of $L\cap (F u_\lambda)^\perp$ as follows
$$(K_\lambda)_v= \begin{cases}  L_v\cap (F_v u_\lambda)^\perp \ \ \ & \text{if $v\not\in T$} \\
K_v \ \ \ & \text{if $v\in T$}
\end{cases} $$
and $(K_\lambda)_{v_0}$ is unimodular by $ord_{v_0}(q(u_\lambda))=0$.

Applying Proposition \ref{arithiso} ii) to $K_\lambda$ for primitive representation outside $\{v_0\}$ with $\lambda \in \Lambda$, one obtains a positive number $h_\lambda$ for $\lambda \in \Lambda$. Let $h=\max_{\lambda\in \Lambda} h_\lambda$ and $$ \gamma = \min_{ v\in \infty_F} \min \{ |q(x)|_v : \ x (\neq 0) \in L'+u_0' \ \ \text{with} \ \  cls(L'+u_0')\in gen(L+u_0) \} .  $$  For any $$\lambda\in \Lambda, \ \ \ a_{v_0}\in (\frak o_{v_0}/(\pi_{v_0}^{h+3}))^\times \ \ \ \text{ and } \ \ \ a_{v_\lambda}\in \frak o_{v_\lambda}/(\pi_{v_\lambda}) , $$ there is $\xi=\xi(\lambda, a_{v_0}, a_{v_\lambda})\in \frak o_F$ such that
 \begin{equation} \label{xi} \begin{cases} |\lambda_v-\xi u_\lambda |_v=|\lambda_v-u_\lambda|_v \ \ \text{and} \ \ (\xi-1)u_0 \in L_v \ \ \ & v\in T \\
ord_{v_0}(\xi-a_{v_0})>h  \ \ \text{and} \ \  ord_{v_0}(\xi-a_{v_0})\equiv 1 \mod 2  \ \ \ & v=v_0 \\
\xi\equiv a_{v_\lambda} \mod \pi_{v_\lambda} \ \ \ & v=v_\lambda \\
 |q(\xi u_\lambda)|_v <\frac{1}{2} \gamma \ \ \ & v\in \infty_F \setminus \{w\} \end{cases}  \end{equation}  by Chinese Remainder Theorem. Let $$C_1=C_1(L,T,u_0,v_0)= \max_{\xi} \{ |q(\xi u_\lambda)|_w \}  $$ where $\xi=\xi(\lambda,a_{v_0}, a_{v_\lambda})$ runs over all the above chosen values.

 Since $\alpha \rightarrow gen(L+u_0)$ with
$ \alpha \xrightarrow{*} L_v $ for those $v\in T$ where $V_v$ is anisotropic and $u_0\in L_v$, there is $y_v\in P_v$ with $q(y_v)=\alpha $ for all $v\in T$.  Then there is $b_v\in B_v$ and the associated vector $\lambda_v$ of $b_v$ with respect to $L_v$ and $s$ for $v\in T$ satisfying the property of Corollary \ref{finilatt}. Let $\lambda_{v_0}\in \frak B_{v_0}$ such that $$\alpha \cdot q(\lambda_{v_0})^{-1}\in (\frak o_{v_0}^\times)^2 $$ by the assumption for $\alpha$. Then $\lambda=(\lambda_v)_{v\in S}\in \Lambda$. For such $\lambda\in \Lambda$, one has $u_\lambda\in L+u_0$ and the prime $v_\lambda\not\in S$ and $K_\lambda$ as above with $$ord_{v_\lambda}(q(u_\lambda))=ord_{v_\lambda}(\det(K_\lambda))=1 . $$

If $ord_{v_\lambda}(\alpha)=1$, there are $x\in \frak o_{\lambda_v}^\times$ and $ \eta_{v_\lambda} \in \frak o_{\lambda_v}$ satisfying the following equation
\begin{equation} \label{equ} \frac{\alpha}{q(u_\lambda)}-\eta_{v_\lambda}^2 \equiv  \frac{-\Delta_{\lambda_v}\cdot \det(K_{\lambda})}{q(u_{\lambda})} x^2  \mod \pi_{\lambda_v} \end{equation}
where $\Delta_{\lambda_v}$ is a fixed non-square unit in $\frak o_{\lambda_v}^\times$. Choose
 $$a_{v_\lambda}\equiv \begin{cases} \eta_{v_\lambda} \ \ \ & \text{$ord_{v_\lambda}(\alpha)= 1$} \\
 1 \ \ \ & \text{otherwise} \end{cases}  \mod \pi_{v_\lambda} . $$
 By the choice of $\delta_{v_0}$, there is $\eta_{v_0}\in \frak o_{v_0}^\times $ such that $\alpha =\eta_{v_0}^2  q(u_{\lambda})$.  Choose
$$a_{v_0}\equiv \eta_{v_0} \mod \pi_{v_0}^{h+3} . $$ For the above chosen $\lambda$, $a_{v_0}$ and $a_{v_\lambda}$, we get $\xi=\xi(\lambda, a_{v_0}, a_{v_\lambda})$ as (\ref{xi}).

\medskip

Next we will show that $\alpha-q(\xi u_\lambda) \xrightarrow{*} (K_\lambda)_v$ for all $v\in \Omega_F$.

When $v\not\in T\cup \{v_\lambda\}\cup \{\infty_F\} $, the claim follows from Lemma 4.2 in \cite{CTX1}.

When $v=v_\lambda$, one has that the first Jordan component of $(K_\lambda)_{v_\lambda}$ is a unimodular lattice of rank $(dim(V)-2)$. One only needs to consider the first Jordan component of $(K_\lambda)_{v_\lambda}$ is of rank 2 and is not a hyperbolic plane. Then the discriminant of the first Jordan component is $-\Delta_{\lambda_v}$. If $\alpha$ is a unit, then $$\alpha-q(\xi u_\lambda) \xrightarrow{*} (K_\lambda)_{v_\lambda}$$ by Hensel's Lemma. If $ord_{v_\lambda}(\alpha)\geq 1$, then $ord_{v_\lambda}(\alpha-q(\xi u_\lambda))=1$ and $\alpha-q(\xi u_\lambda)$ is represented by $(K_\lambda)_{v_\lambda}$ by the equation (\ref{equ}). This implies that
$$\alpha-q(\xi u_\lambda) \xrightarrow{*} (K_\lambda)_{v_\lambda}$$ by the order consideration of the coefficients.

When $v\in T$, then $$\alpha-q(\xi u_\lambda)\xrightarrow{*} (K_\lambda)_v$$ by Corollary \ref{finilatt}.

When $v\in \infty_F\setminus \{w\} $, then $\alpha-q(\xi u_\lambda)$ and $\alpha$ have the same sign at $v$ by the choice of $\gamma$. Therefore $\alpha-q(\xi u_\lambda)$ is represented by $F_vK_\lambda$.

When $v=w$, the assumption that $\alpha > C_1$ at $w$ implies that $\alpha-q(\xi u_\lambda)$ is positive at $w$. Therefore $\alpha-q(\xi u_\lambda)$ is represented by $F_wK_\lambda$.

\medskip

Since $ord_{v_0}(\alpha-q(\xi u_\lambda))>h$ and $ord_{v_0}(\alpha-q(\xi u_\lambda))\equiv 1 \mod 2$ by (\ref{xi}), one obtains $z\in K_\lambda$ with $z\in^* (K_\lambda)_v$ for $v\neq v_0$ such that $q(z)=\alpha-q(\xi u_\lambda)$ by applying Corollary \ref{spn} and Proposition \ref{arithiso} to $K_\lambda$ and $\{ v_0 \}$. Therefore $\alpha=q(\xi u_\lambda+z)$ with $\xi u_{\lambda}+z\in L+u_0$. Since $$(K_\lambda)_v =  L_v \cap (F_v u_\lambda)^\perp $$ for $v\not \in T$, one concludes that $\xi u_{\lambda}+z\in^* L_v$ for all $v\not\in T$ and $v\neq v_0$. One also has $\xi u_{\lambda}+z\in^* L_{v_0}$ by $ord(\alpha)=ord(q(\xi u_\lambda+z))=0$. We complete the proof of Claim \ref{claim}.

\medskip

Finally we need to modify Cassels' trick (see Lemma 9.1 of Chapter 11 in \cite{Cas}) over a number field to complete the proof by using Claim \ref{claim}. Choose a non-archimedean $v_0\not \in T$ such that $d(L_{v_0}) \in (\frak o_{v_0}^\times)^2$ and $v_0$ is a principle ideal of $\frak o_F$ by the Chebatorev density theorem. Let $V^\alpha$ be the quadratic space obtained by scaling $V$ with $\alpha\in F^\times$. Fix a generator $\pi_{v_0}$ of prime ideal $v_0$ of $\frak o_F$ such that $0< \pi_{v_0} <1$ at $w$. Then $$V\cong V^{\pi_{v_0}^{2m}} \ \ \ \text{and} \ \ \ V^{\pi_{v_0}}\cong V^{{\pi_{v_0}}^{2m+1}}$$ over $F$ for all $m\in \Bbb Z$ by 66:5 Remark in \cite{OM}. Since there are only finitely many classes with the given scale and volume in a non-degenerated quadratic space by 103:4 Theorem in \cite{OM}, one concludes that the set
$$ \frak {C}= \{ cls(K+u_0): \text{$K$ is a lattice in $V^{\pi_{v_0}^m}$ with} \  \frak s(K)=\frak s(L) \ \text{and} \ \frak v(K)=\frak v(L),  \ \forall m\in \Bbb Z \} $$ is finite, where $\frak s(K)$ and $\frak s(L)$ are the scales of $K$ and $L$ respectively and $\frak v(K)$ and $\frak v(L)$ are the volumes of $K$ and $L$ respectively defined as \S 82E in \cite{OM}. Let
$$ c=c(L,T,u_0) = \max_{cls(K+u_0)\in \frak C} \{ C_1(L,T,u_0,v_0) \ \text{as Claim \ref{claim}}  \}. $$

If $\alpha \rightarrow gen(L+u_0)$ with $\alpha > c$ at $w$ and $\alpha \xrightarrow{*}  L_v$ when $V_v$ is anisotropic and $u_0\in L_v$ for $v\in T$, there is a positive integer $k$ such that $ord_{v_0}(\pi_{v_0}^{-k} \alpha)=0$ and $\pi_{v_0}^{-k} \alpha> \alpha >c$ at $w$. Since
$$ L_{v_0} = (\frak o_{v_0} e_1 + \frak o_{v_0} f_1) \perp (\frak o_{v_0} e_2 + \frak o_{v_0} f_2)$$ with $q(e_1)=q(f_1)=q(e_2)=q(f_2)=0$ and $\langle e_1,f_1 \rangle =\langle e_2, f_2 \rangle =1 $, one defines $M\subset L$ as follows
$$ M_v = \begin{cases}  (\frak o_{v_0} e_1 + \frak o_{v_0} \pi_{v_0}^k f_1) \perp (\frak o_{v_0} e_2 + \frak o_{v_0} \pi_{v_0}^k  f_2) \ \ \ & v=v_0 \\
L_v \ \ \ & v\neq v_0.  \end{cases} $$
Then $M$ is a lattice in $V^{\pi_{v_0}^{-k}}$ such that $cls(M+u_0) \in \frak C$. Applying Claim \ref{claim} for $\pi_{v_0}^{-k} \alpha$ and $M+u_0$ in $V^{\pi_{v_0}^{-k}}$, one gets $x\in (M+u_0) \subset (L+u_0)$ with $x\in^* M_v$ for $v\not\in T$ such that $q(x)=\alpha$. Since $ord_{v_0}(\alpha)=k$, one concludes that $x\in^* L_{v_0}$ as well. The proof is complete. \end{proof}

The weak version of Theorem \ref{defrep} was also proved in Theorem 4.9 of \cite{CO}. 

\begin{rem} It should be pointed out that Cassels' trick (Lemma 9.1 of Chapter 11 in \cite{Cas}) can be applied only for even dimensional quadratic spaces.
For Statement (LT) with $dim(V)\geq 5$, one needs a slight different proof (see p.236-p.241 in \cite{Cas} for weak version). In \cite{JK}, J\"ochner and Kitaoka generalized such result to a lattice of rank $n$ represented by a lattice of rank $m\geq 2n+3$ with congruent condition under a bounded restriction. In \cite{J}, J\"ochner further removed such bounded restriction for $n=2$.
\end{rem}

\bigskip

\begin{bibdiv}

\begin{biblist}

\bib {Cas}{book}{
    author={J.W.S. Cassels},
     title={Rational quadratic forms},
     publisher={Academic Press},
     place={London},
      journal={ },
            series={London Mathematical Society Monographs},
    volume={13},
    date={1978},
    number={ },
     pages={},
}

\bib{CO} {article} {
    author={W.K.Chan},
    author={B.K.Oh},
    title={Representations of integral quadratic polynomials},
    journal={Contemp. Math.},
    volume={587},
    date={2013},
    Pages={31-46},
}

\bib{CTH} {article} {
    author={J.-L. Colliot-Th\'el\`ene},
    author={David Harari},
    title={Approximation forte en famille},
    journal={preprint},
    volume={},
    date={},
    Pages={},
}

\bib{CTX} {article} {
    author={J.-L. Colliot-Th\'el\`ene},
    author={F. Xu},
    title={Brauer-Manin obstruction for integral points of homogeneous spaces and
         representations by integral quadratic forms},
    journal={Compositio Math.},
    volume={145},
    date={2009},
    Pages={309-363},
}

\bib{CTX1} {article} {
    author={J.-L. Colliot-Th\'el\`ene},
    author={F. Xu},
    title={Strong approximation for total space of certain quadric fibrations},
    journal={Acta Arithmetica},
    volume={157},
    date={2013},
    Pages={169-199},
}

\bib {Ei}{book}{
    author={M.Eichler},
     title={Quadratische Formen und orthogonale Gruppen},
     publisher={Springer },
     place={Berlin},
      journal={ },
            series={Grundlehren der Mathematik},
    volume={63},
    date={1952},
    number={ },
     pages={},
}

\bib{HJ}{article}{
author={J.S.Hsia}
author={M.J\"ochner}
title={Almost strong approximations for definite quadratic spaces}
journal={Invent.math.}
vol={129}
date={1997}
Pages={471-487}
}

\bib{HKK}{article} {
author={J.S.Hsia}
author={Y.Kitaoka}
author={M.Kneser}
title={
Representations of positive definite quadratic
forms},
journal={J. reine und angew.
Math.},
vol={301}
date={1978}
Pages={132-141}
}

\bib{HSX}{article} {
author={J.S.Hsia}
author={Y.Y.Shao}
author={F.Xu}
title={
Representations of indefinite quadratic
forms},
journal={J. reine und angew.
Math.},
vol={494}
date={1998}
Pages={129-140}
}

\bib{J}{article}{
author={M.J\"ochner}
title={Representations of positive definite quadratic forms with congruence and primitive conditions, II}
journal={J. Number Theory}
vol={50}
date={1995}
pages={145-153}
}

\bib{JK}{article}{
author={M.J\"ochner}
author={Y.Kitaoka}
title={Representations of positive definite quadratic forms with congruence and primitive conditions}
journal={J. Number Theory}
vol={48}
date={1994}
pages={88-101}
}

\bib{Ki}{article} {
author={Y.Kitaoka},
title={Representaions of quadratic forms},
journal={Nagoya Math. J.},
volume={69}
date={1978}
Pages={ 117-120},
}

\bib{Ki1}{book}{
author={Y.Kitaoka},
title={Arithmetic of Quadratic Forms},
publisher={Cambridge University Press},
journal={},
series={Cambridge Tracts in Mathematics},
vol={106},
date={1993},
number={},
pages={},
}

\bib{Kn}{article} {
author={M.Kneser},
title={Darstellungsmasse indefiniter quadratischer Formen},
journal={Math. Z.},
volume={77}
date={1961}
Pages={188-194},
}

\bib{N}{book}{
    author={ J.Neukirch},
    title={Algebraic Number Fields},
    volume={322},
    publisher={Springer},
    series={Grundlehren},
    date={1999},
}

\bib {OM}{book}{
    author={O.T. O'Meara},
     title={Introduction to quadratic forms},
     publisher={Springer},
     place={Berlin},
      journal={ },
            series={Grundlehren der Mathematik},
    volume={270},
    date={1971},
    number={ },
     pages={},
}

\bib{OM58} {article} {
    author={O.T.O'Meara},
    title={The integral representations of quadratic forms over local fields},
    journal={Amer. J. Math.},
    volume={80},
    date={1958},
    Pages={843-878},
}

\bib {PR}{book}{
    author={V.P. Platonov},
    author={A.S. Rapinchuk}
     title={Algebraic groups and number theory},
     publisher={Academic Press},
     place={},
      journal={ },
            series={},
    volume={},
    date={1994},
    number={ },
     pages={},
}

\bib{Rie} {article} {
    author={C.Riehm},
    title={On the integral representations of quadratic forms over local fields},
    journal={Amer. J. Math.},
    volume={86},
    date={1964},
    Pages={25-62},
}

\bib{SX} {article} {
    author={R.Schulze-Pillot},
    author={F.Xu}
    title={Representations by spinor genera of ternary quadratic forms},
    journal={Contemp.Math.},
    volume={344},
    date={2004},
    Pages={323-337},
}

\bib{Shi1} {article} {
    author={G.Shimura},
    title={Inhomogeneous quadratic forms and triangular numbers},
    journal={Amer. J. Math.},
    volume={126},
    date={2004},
    Pages={191-214},
}

\bib{Shi2} {article} {
    author={G.Shimura},
    title={Quadratic Diophantine equations and orders in quaternion algebras},
    journal={Amer. J. Math.},
    volume={128},
    date={2006},
    Pages={481-518},
}

\bib{Shi3} {article} {
    author={G.Shimura},
    title={Integer-valued quadratic forms and quadratic Diophantine equations.},
    journal={Doc. Math.},
    volume={11},
    date={2006},
    Pages={333-367},
}

\bib {Sko}{book}{
    author={A. N. Skorobogatov},
     title={Torsors and rational points},
     publisher={Cambridge University Press},
     place={},
      journal={ },
            series={Cambridge Tracts in Mathematics},
    volume={144},
    date={2001},
    number={ },
     pages={},
}

\bib {Sun07} {article}{
author={Z. W. Sun}
title={ Mixed sums of squares and triangular numbers}
journal={Acta Arith.}
volume={127}
date={2007}
pages={103-113}
}

\bib{Wat60} {article} {
    author={G.L.Watson},
    title={Quadratic diophantine equations},
    journal={Philos.Trans.Roy.Soc.London A},
    volume={253},
    date={1960},
    Pages={227-254},
}

\bib{Wat} {article} {
    author={G.L. Watson},
    title={Diophantine equations reducible to quadratics},
    journal={Proc. London Math. Soc.},
    volume={17},
    date={1967},
    Pages={26-44},
}

\bib{WX} {article} {
    author={D. Wei},
    author={F. Xu},
 title={Integral points for multi-norm tori},
  journal={Proc. London Math. Soc.},
    volume={104},
    number={3}
      date={2012},
    pages={1019-1044},
 }

\bib{Xu} {article} {
    author={F. Xu},
    title={On representations of spinor genera II},
    journal={Math. Ann.},
    volume={332},
    date={2005},
    Pages={37-53},
}

\end{biblist}
\end{bibdiv}

\end{document}